\documentclass[a4paper,12pt,oneside,reqno]{amsart}
\usepackage[utf8]{inputenc}
\usepackage{mathtools}

\makeatletter
\@namedef{subjclassname@2020}{\textup{2020} Mathematics Subject Classification}
\makeatother

\allowdisplaybreaks[1]

\usepackage{amsmath,amsthm,amssymb}
\usepackage{hyperref}
\usepackage{amsfonts}
\usepackage{amsmath}
\usepackage{amssymb}
\usepackage{mathrsfs}
\usepackage{tabto}
\usepackage{changepage}
\usepackage{fullpage}
\usepackage{graphicx}
\usepackage{hyperref}
\usepackage{tikz}
\usepackage{caption}   
\usepackage{subfigure}
\usepackage{float}
\usepackage{enumitem}

\usepackage{tikz}
\usetikzlibrary{positioning}
\numberwithin{equation}{section}

\newtheorem{theorem}{Theorem}[section]
\newtheorem{lemma}[theorem]{Lemma}
\newtheorem{corollary}[theorem]{Corollary}
\newtheorem{proposition}[theorem]{Proposition}

\newtheorem{remark}[theorem]{Remark}

\thanks{The research work of the first author is supported by the Institute Research Fellowship of BIT Mesra APO/2024-25/49 and the research work of the second author is supported by ANRF(SERB) research grant TAR/2023/000197}

\title[Independence polynomial]{Structural classification of a graph with Independence number five}
\author[M. Kumari]{Manisha Kumari}
\address{Department of Mathematics, Birla Institute of Technology Mesra
	Ranchi--835 215, India}

\email{phdam10052.24@bitmesra.ac.in}

\author[D. Kumar]{Dinesh Kumar}
\address{Department of Mathematics, Birla Institute of Technology Mesra
	Ranchi--835 215, India}

\email{dineshkumar@bitmesra.ac.in }

\keywords{Independence polynomial,  Independence number, Independence attractor, Independence fractal}
\subjclass{37F20; 37F50; 05C69; 37F10}
\begin{document}
	\begin{abstract}
		The independence polynomial of a simple graph $G$ is given by \( I_G(z) = i_0 + i_1 z + i_2 z^2 + \cdots + i_\alpha z^\alpha \), where \( i_\alpha \) denotes the size of a maximum independent set, also called the independence number of the graph. The independence polynomial has the notable feature of being essentially closed under graph composition (lexicographic product). In this paper, we determine the independence polynomials of size five. For a disconnected graph $G$, we exploit the fact that $I_G(z)$ factors as the product of the independence polynomials of the connected components of $G$. Furthermore, we classify all independence polynomials that can occur for such a disconnected graph $G$ and, by examining their component structures, we characterize the disconnected configurations that may arise.
		
		%	The independence attractor of a graph \( G \), denoted as \( \mathscr{A}(G) \), represents the limiting set of roots of the independence polynomials of repeated lexicographic products of \( G \) with itself, evaluated under the Hausdorff metric. 
		%	This paper focuses on studying an independence attractor.
		%	We present a graph with an independence number of five, whose independence attractors are line segments. Additionally, we determine all possible independence polynomials for such graphs and identify the disconnected configurations that can emerge by analyzing their component structures. 
	\end{abstract}
	\maketitle
	\section{introduction}
	For a simple graph \( G \) which is a graph without loops or parallel edges, an \emph{independent set} is defined as a subset of vertices where no two vertices are adjacent \cite{rosenfeld1964independent}. Let $|V(G)|$ denote the set of vertices of $G$. Denote by \( i_k \) the number of independent sets of cardinality \( k \) in \( G \), and let \(\alpha(G) \) denote the \emph{independence number} of \( G \), which is the maximum size of any independent set. 
	The independence polynomial of \( G \) is defined as 
	\[
	I_G(z) = i_0 + i_1 z + i_2 z^2 + \cdots + i_\alpha z^\alpha,
	\]
	where \( i_0 = 1 \) (since the empty set is the unique independent set of cardinality zero) and \( i_1 = |V(G)| \) (as each individual vertex constitutes a singleton independent set), \cite{hoshino2007independence}. For more details on independence polynomial, one can refer \cite{bollobas1998modern, survey05}.
	The coefficients of the polynomial contain significant combinatorial information: \( i_2 \) counts the number of non-edges in \( G \) 
    % (which is equivalent to counting the edges in the complement graph \( G^c \))
    , while \( i_3 \) denotes the number of independent sets of size three which is related to triangle-free structures in \( G\). The leading coefficient \( i_\alpha \) counts the maximum independent set of size $\alpha$. In this way, the independence polynomial encapsulates the entire series of independent set counts in \( G \) into a single analytic object, allowing for both algebraic and geometric analysis.
	The roots of \( I_G(z) \), known as the independence roots of the graph \( G \), have garnered significant attention. A classical result by Heilmann and Lieb~\cite[Lemma 4.1]{heilmann1972theory} established that if \( G \) is a line graph, then all roots of \( I_G \) are real.
	Independence roots are known to be dense in the complex plane for general graphs~\cite{brown2003independence, chudnovsky2007roots}. However, for specific families, such as paths, they are dense in the real interval \((- \infty, -\frac{1}{4}]\)~\cite{brown2003independence, brown2004location}.
	A decisive new perspective on these roots emerged from their study under iterated graph composition. The lexicographic product (or composition) of graphs \( G \) and \( H \), denoted \( G[H] \), is defined on the vertex set \( V(G) \times V(H) \). In this product, the vertex \((u,v)\) is adjacent to \((u',v')\) if either \( u \) is adjacent to \( u' \) in \( G \), or \( u = u' \) and \( v \) is adjacent to \( v' \) in \( H \). Equivalently, \( G[H] \) is constructed by replacing each vertex of \( G \) with a copy of \( H \) and connecting the vertices across different copies according to the adjacency defined in \( G \). This operation is associative, allowing for the formation of the \( m \)-fold iterated product \( G^m = G[G[\cdots[G]\cdots]] \) $m$-times.
	A remarkable identity, proven by Brown et al. ~\cite[Theorem 1.1]{brown2003independence}, states that 
	\[
	I_{G[H]}(z) = I_G(I_H(z) - 1).
	\]
	By introducing the reduced independence polynomial $f_G(z) = I_G(z) - 1 = i_1 z + i_2 z^2 + \cdots + i_\alpha z^\alpha.$ The above equation can be reformulated as $f_{G[H]} = f_G \circ f_H,$
	which makes the reduced polynomial a natural object for iteration. From this, one can derive, 
	\[
	f_{G^m} = f_G^{\circ m} 
	\]
	% (the \( m \)-th compositional iterate of \( f_G \)), 
    leading to the conclusion that 
	\[
	\mathrm{Roots}(I_{G^m}) = \{ z : f_G^{\circ m}(z) = -1 \} = f_G^{-m}(-1) \quad \text{for all } m \ge 1.
	\]
	The compatibility of functional iteration positions with independence polynomials firmly within the realm of complex analytic dynamics. Brown et al. \cite[Proposition 3.2]{brown2003independence} initiated the study of the independence fractal of a graph \( G \), denoted as $\mathscr{F}(G)$, which is defined as the set of limiting roots of its reduced independence polynomial,
	\begin{equation}\label{eq:fractal}
		\mathscr{F}(G)
		\;=\;
		\lim_{m\to\infty} \mathrm{Roots}\bigl(f_{G^m}\bigr)
		\;=\;
		\lim_{m\to\infty} f_G^{-m}(0).
	\end{equation} Their central theorem establishes that for any graph \( G \neq K_1 \), this limiting set precisely coincides with the Julia set of the reduced independence polynomial, where $K_n$ denotes the complete graph on $n$ vertices.
	For a graph $G$ other than $K_1$,
	$\mathscr{F}(G) \;=\; \mathcal{J}(f_G)$ \cite[Theorem 3.3]{brown2003independence}. The proof rests on the key fact that $0$ is always a repelling fixed
	point of $f_G$, since $f_G'(0)=i_1=|V(G)|>1$, so by the classical
	backward-orbit theorem the iterated preimages $f_G^{-m}(0)$ converge to
	$\mathcal{J}(f_G)$ in the Hausdorff metric. This theorem endows every
	graph with a canonically associated fractal whose geometry encodes deep
	structural properties of $G$. Alongside the independence fractal, one defines the \emph{independence	attractor} of $G$ as
	\begin{equation}\label{eq:attractor}
		\mathscr{A}(G)
		\;=\;
		\lim_{m\to\infty} \mathrm{Roots}\bigl(I_{G^m}\bigr)
		\;=\;
		\lim_{m\to\infty} f_G^{-m}(-1),
	\end{equation}
	where the limit is again taken in the Hausdorff metric \cite[Theorem 3.2.14]{hickman2002roots}. The relationship
	between $\mathscr{A}(G)$ and $\mathscr{F}(G)$ is governed by the
	arithmetic of $-1$ as a root of $I_G$.  Specifically,
	\begin{enumerate}
		\item If $-1$ is not a root of $I_G$, then
		$\mathscr{A}(G)=\mathscr{F}(G)=\mathcal{J}(f_G)$.
		\item If $-1$ is a simple root of $I_G$, then again
		$\mathscr{A}(G)=\mathscr{F}(G)=\mathcal{J}(f_G)$.
		\item If $-1$ is a multiple root of $I_G$, then $-1$ is a
		super-attracting fixed point of $f_G$ lying in the Fatou set,
		and $\mathscr{A}(G)$ is the disjoint union of $\mathscr{F}(G)$
		and $\bigcup_{m\ge 1}f_G^{-m}(-1)$, with $\mathscr{F}(G)$
		serving as the limit set of the latter.
	\end{enumerate}
	Barik et al. \cite{barik2021graphs} rigorously established this precise trichotomy, which was further developed by Khetawat et al. \cite{khetawat2025circles}. 
For a disconnected graph \( G \), represented as the disjoint union of its connected components \( H_1, H_2, \ldots, H_r \), we can express \( G \) as $G = H_1 \cup H_2 \cup \cdots \cup H_r$, then	\begin{equation}\label{prod}
		I_G(z) = \prod_{j=1}^{r} I_{H_j}(z).
	\end{equation}
	This is a direct consequence of the fact that an independent set in \( G \) is simply a choice of an independent set from each component independently. The independence number satisfies the equation 
	$\alpha(G) = \sum_{j=1}^r \alpha(H_j),$
	and the coefficients of the independence polynomial \(I_G(z)\) are the convolution products of the respective coefficient sequences. This multiplicative structure plays a pivotal role in classifying which disconnected graphs possess a given independence polynomial or achieve a specified independence attractor.
	
	The topological classification of independence attractors is a compelling and active area of research. Khetawat et al. \cite[Lemma 3]{khetawat2025circles}, proved that \(K_n^c\) is the only graph whose independence fractal is a circle, thereby ruling out the possibility of any independence attractor being a circle itself. They showed that
	%Denote by $\mathscr{A}(G)$ the independence attractor of a graph $G$.
    if \(\mathscr{A}(G)\) is a line segment, it must be a real interval given by 
	$
	\Bigl[-\tfrac{4}{k}, 0\Bigr]$
	for some \(k \in \{1, 2, 3, 4\}\), \cite[Theorem B]{khetawat2025circles}. The proof shows that the Julia set of \(f_G\) must lie on the real axis, which necessitates that \(f_G\) be conjugate to the Chebyshev polynomial \(T_n\) by means of the affine map 
	$\varphi(z) = \tfrac{k}{2}z + 1.$
	As the Julia set \(\mathcal{J}(T_n)\) is given by \([-1, 1]\), the resulting independence fractal can be expressed as 
	$\varphi^{-1}\bigl([-1, 1]\bigr) = \Bigl[-\frac{4}{k}, 0\Bigr].$
	A crucial combinatorial obstruction regarding the number of vertices and edges of a graph rules out the case \(k = 5\) and confines the parameter \(k\) to \(\{1, 2, 3, 4\}\). Barik et al.\cite{barik2021graphs} established the line segment classification for independence number three, while Khetawat et al.\cite{khetawat2025circles}, extended this classification to all independence numbers, providing explicit examples for independence number four.\\
    This paper expands upon the existing program by focusing on graphs with an independence number of five. We identify all reduced independence polynomials \( f_G(z) \) of degree five for which the Julia set is a line segment. Additionally, we categorize the four possible independence attractors, specifically \(\left[-\frac{4}{k}, 0\right]\) for \(k \in \{1, 2, 3, 4\}\), and provide explicit examples in the situation when the graph is disconnected, corresponding to each attractor. For disconnected graphs with an independence number of five, we utilize the multiplicative structure described in equation~\eqref{prod} to systematically enumerate all admissible component decompositions. The independence polynomial of each component must align with the Chebyshev conjugacy framework, while the convolution constraint on coefficients ensures a finite and computationally manageable classification. 
	
	Our analysis leverages the explicit coefficient formulas established in \cite{khetawat2025circles}, the rational root theorem to eliminate inadmissible parameter values. The relationship between the algebraic rigidity of independence polynomial coefficients and the complex dynamic requirements of the Julia set makes the study quite complicated. We have provided a comprehensive understanding of the case for independence number five.
	
	\medskip
	
	\noindent\textbf{Organization.}\quad  
	This paper is organized as follows: 
	% - Section~\ref{sec:prelim} reviews the necessary background on independence polynomials, lexicographic products, and the Julia-set framework.  
	Section 2 presents the main classification theorem for graphs with an independence number five, specifically for those whose independence attractor is a line segment.  
	 Section 3 addresses the disconnected case, enumerating all admissible component decompositions.  
	Section 4 provides explicit graph constructions that realize each attractor.  

	\section{Independence Polynomial of Graphs with Independence Number Five}\label{sec 2}
	% The first result in this direction is the following:
  In this section, we examine a graph whose independence number is five. In this direction, we recall a result that gives a connection between a polynomial of degree $d\geq 2$ with complex coefficients and the Chebyshev polynomial $T_d$.
    \begin{theorem} \cite{beardon2000iteration}\label{TH1}
     Let $T$ be a polynomial with complex coefficients and degree $d
 \ge 2$. Then the interval $[-1,1]$ is completely invariant under $T$ if and only if $T$ is $\pm T_d$.  
    \end{theorem}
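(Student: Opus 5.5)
The forward direction is a direct computation. With $T_d(\cos\theta)=\cos(d\theta)$, the interval $[-1,1]$ is the image of $\theta\mapsto\cos\theta$ for $\theta\in\mathbb{R}$. Hence $T_d([-1,1])=\{\cos d\theta\}=[-1,1]$, and the same holds for $-T_d$. For backward invariance, take $w\in[-1,1]$ and write $w=\cos\phi$. The $d$ roots of $T_d(z)=w$ are $\cos((\phi+2\pi j)/d)$ for $j=0,\dots,d-1$, and all of them are real and lie in $[-1,1]$. For generic $w$ these are $d$ distinct points, so they exhaust the preimage. For the finitely many exceptional $w$ (namely $w=\pm1$), I would argue by continuity, or by counting multiplicities via $T_d'$. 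The same argument handles $-T_d$, since $-w\in[-1,1]$. Therefore $T^{-1}([-1,1])=[-1,1]$.

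For the converse, suppose $T^{-1}([-1,1])=[-1,1]$. Then $T$ maps $[-1,1]$ onto $[-1,1]$, so $T$ is real on an interval and therefore has real coefficients. Moreover, every $w\in(-1,1)$ has all $d$ preimages, counted with multiplicity, in $[-1,1]$. I would first show that these preimages are distinct for every $w\in(-1,1)$. A real polynomial of degree $d$ that has $d$ real roots of $T-w$ for every $w$ in an interval must oscillate: $T$ restricted to $[-1,1]$ has $d-1$ critical points inside $(-1,1)$, and all critical values lie in $\{\pm1\}$. Indeed, if some critical value $c$ lay in $(-1,1)$, then for $w$ slightly beyond $c$ on the wrong side, two real roots would become complex, contradicting complete invariance. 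Also $T(\pm1)\in\{\pm1\}$, because otherwise a level $w$ near $\pm1$ would have a preimage outside $[-1,1]$ near the endpoint.

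The key step is then the Pell-type identity $1-T(z)^2=c\,(1-z^2)\,T'(z)^2$. The roots of $1-T^2$ are exactly the $d-1$ interior critical points, each a double root, together with $\pm1$, each a simple root. This gives $2(d-1)+2=2d$ roots, and comparing degrees and roots yields the identity with $c=1/d^2$. Solving this differential equation with $T=\cos(d\,\arccos z)$ locally, I would get $T(\cos\theta)=\cos(d\theta+\theta_0)$. The requirement that $T$ be a polynomial, equivalently even or odd in $\theta$, forces $\theta_0\in\{0,\pi\}$, so $T=\pm T_d$.

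I expect the main obstacle to be rigorously justifying that every interior critical value equals $\pm1$ and that the endpoints are simple roots of $1-T^2$. The alternative would be to use the Julia-set characterization: $J(T)=[-1,1]$ forces the equilibrium measure to be the arcsine law, and Brolin's theorem gives conjugacy to Chebyshev. Since this is a classical result from \cite{beardon2000iteration}, it may also simply be cited.
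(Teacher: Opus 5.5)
The paper does not prove this statement. It imports it from Beardon's book and uses it as a black box in the proof of Theorem~\ref{th2.1}, so there is no argument to match. Your proposal is a correct, self-contained proof along a standard elementary line: real coefficients, equioscillation of $T$ on $[-1,1]$, the Pell-type identity $(1-z^2)T'^2=d^2(1-T^2)$, and the substitution $z=\cos\theta$. What it buys is transparency about exactly what complete invariance forces; the citation buys brevity.

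Your analytic alternative is too compressed to count as a proof, because Brolin's equidistribution theorem does not by itself produce a conjugacy. A clean analytic argument instead conjugates by the Joukowski map $h(w)=\tfrac12(w+w^{-1})$. This turns $T$ into a proper degree-$d$ self-map of $\{|w|>1\}$ that is totally ramified at $\infty$, hence equal to $w\mapsto\lambda w^d$ with $|\lambda|=1$. The symmetry $h(w)=h(1/w)$ then forces $\lambda=\pm1$.

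Within your argument, three steps need tightening.

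First, the local `wrong side' argument only excludes critical values in $(-1,1)$. It does not show that the critical points are simple zeros of $T'$, which you need when you call them double roots of $1-T^2$. Nor does it show that $\pm1$ are not critical, which you need for the simple roots. Also, your endpoint sentence should refer to levels near $T(\pm1)$, not near $\pm1$. All of this follows at once from lap counting:
\begin{enumerate}
\item Let $D\le d-1$ be the number of distinct zeros of $T'$ in $(-1,1)$. They cut $[-1,1]$ into $D+1$ closed laps, on each of which $T$ is strictly monotone.
\item A level $w\in(-1,1)$ that is neither a critical value nor one of $T(\pm1)$ has $d$ distinct preimages, all in $[-1,1]$. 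Each lies in the interior of some lap, and there is at most one per lap.
\item Hence $D=d-1$, so all zeros of $T'$ are simple and lie in $(-1,1)$.
\item Each lap then contains exactly one preimage of every such $w$, so every lap is mapped onto $[-1,1]$. Consequently every lap endpoint, including $\pm1$, is sent to $\pm1$.
\end{enumerate}

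Second, for $g(\theta)=T(\cos\theta)$ the equation $g'^2=d^2(1-g^2)$ has non-unique solutions through $g=\pm1$. So `solving locally' should be completed by analyticity: differentiating gives $g'(g''+d^2g)=0$ with $g'\not\equiv0$, hence $g=A\cos d\theta+B\sin d\theta$ on all of $\mathbb{R}$ with $A^2+B^2=1$.

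Third, $g$ is automatically even in $\theta$ (not `even or odd'), and evenness alone gives $B=0$, i.e.\ $T=\pm T_d$.
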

   We mainly focus on the Chebyshev polynomial of degree five. The following theorem describes all possible quintic reduced independence polynomials whose independence fractal is a line segment. 
	\begin{theorem}\label{th2.1}
		The independence fractal of a graph $G$ with independence number five is a line segment if and only if the reduced independence polynomial of the graph is of the form $f_G(z)=25z +50kz^2 +35k^2z^3 + 10k^3z^4 +k^4z^5$ for some $k\in \{1,2,3,4,5\}$.
	\end{theorem}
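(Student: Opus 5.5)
The plan is to reduce the statement to Theorem~\ref{TH1} via an affine conjugacy and then read off the coefficients. For the forward direction, suppose $\mathcal{J}(f_G)=\mathscr{F}(G)$ is a line segment $L$ (using \cite[Theorem 3.3]{brown2003independence}). Choose an affine map $\psi(z)=az+b$ with $\psi(L)=[-1,1]$, and set $T=\psi\circ f_G\circ\psi^{-1}$. Then $T$ is a quintic whose Julia set is $[-1,1]$. Since the Julia set is completely invariant, Theorem~\ref{TH1} gives $T=\pm T_5$, where $T_5(x)=16x^5-20x^3+5x$.

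Next we use the fixed point $0$ of $f_G$. It is repelling with multiplier $f_G'(0)=i_1$, so $\psi(0)$ is a fixed point of $\pm T_5$ in $[-1,1]$ with multiplier $i_1$. On $(-1,1)$ we have $|T_5'(\cos\theta)|=|5\sin 5\theta/\sin\theta|\le 25$, with equality only at $\pm1$, so I expect $\psi(0)$ to be forced to an endpoint of the segment. The case $T=-T_5$ should be excluded because $-T_5$ swaps $\pm1$ and so has no fixed endpoint. I would also check that its interior fixed points have multipliers of the wrong size or sign compared with the positive integer $i_1=|V(G)|\ge 2$. For $T=T_5$, the oddness of $T_5$ lets us conjugate by $z\mapsto -z$ and assume $\psi(0)=1$. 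This gives $\psi(z)=\tfrac{k}{2}z+1$ for some nonzero complex $k$, and hence
\[
f_G(z)=\frac{2}{k}\Bigl(T_5\bigl(\tfrac{k}{2}z+1\bigr)-1\Bigr).
\]
Taylor expanding $T_5$ at $1$, using $T_5'(1)=25$ and $T_5''(1)=200$, yields exactly $25z+50kz^2+35k^2z^3+10k^3z^4+k^4z^5$. In particular $|V(G)|=25$.

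The main obstacle is pinning down $k$. The coefficients $i_2=50k$ and $i_5=k^4$ are positive integers, so $k$ is a positive rational, and $k^4\in\mathbb{Z}$ then forces $k\in\mathbb{Z}_{>0}$ by the rational root theorem. Since $i_2$ counts non-edges, $50k\le\binom{25}{2}=300$, so $k\le 6$. If $k=6$, every pair of vertices is non-adjacent, so $G=K_{25}^c$ and $\alpha(G)=25\neq5$, a contradiction. Hence $k\in\{1,2,3,4,5\}$. I would double-check that no other coefficient bound, for example $i_3\le\binom{25}{3}$, cuts further; these appear consistent, since the statement includes $k=5$.

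For the converse, if $f_G$ has the stated form, the computation above reverses: $f_G=\psi^{-1}\circ T_5\circ\psi$ with $\psi(z)=\tfrac{k}{2}z+1$. Therefore $\mathscr{F}(G)=\mathcal{J}(f_G)=\psi^{-1}([-1,1])=[-\tfrac{4}{k},0]$, which is a line segment.
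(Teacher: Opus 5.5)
There is a gap at the step that forces $\psi(0)$ to be an endpoint. The rest of your route works, and it is cleaner than the paper's. The paper expands $\phi\circ T_5\circ\phi^{-1}$ and solves the constant-term equation for $b$. That yields $b\in\{0,\pm a,\pm a/2\}$, i.e.\ the five fixed points of $T_5$, which it then eliminates one at a time by coefficient signs. You instead use conjugacy invariance of the multiplier at the fixed point $0$. You also treat $-T_5$, which the paper never does. Your proof that $k\in\mathbb{Z}_{>0}$ (from $50k\in\mathbb{Z}_{>0}$ and $k^4\in\mathbb{Z}$) is self-contained, whereas the paper cites outside results.

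The bound $|T_5'|\le 25$ on $[-1,1]$, with equality only at $\pm1$, only gives $i_1\le 25$. It would force an endpoint only if you already knew $i_1=25$, which is the conclusion you want. What matters is the finite list of fixed points and their multipliers:
\begin{itemize}
\item $T_5$ has fixed points $1,\tfrac12,0,-\tfrac12,-1$, with multipliers $25,-5,5,-5,25$.
\item $-T_5$ has fixed points $\pm\tfrac{\sqrt3}{2}$, $\pm\tfrac{\sqrt2}{2}$, $0$, with multipliers $-5$, $5$, $-5$ respectively.
\end{itemize}
So the interior points $0$ (for $T_5$) and $\pm\tfrac{\sqrt2}{2}$ (for $-T_5$) have multiplier $+5$, a positive integer $\ge 2$. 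Your proposed check (``wrong size or sign compared with $i_1=|V(G)|\ge 2$'') does not exclude them. Your dismissal of $-T_5$ (``no fixed endpoint'') is likewise valid only after these points are ruled out.

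The repair is short. Since $\alpha(G)=5$, you have $i_1=|V(G)|\ge 5$, and $i_1=5$ only if $G=K_5^c$. In that case $f_G(z)=(1+z)^5-1$ is conjugate to $w\mapsto w^5$ by $w=z+1$, so its Julia set is the circle $|z+1|=1$, not a segment.

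For $T_5$ alone, you could instead argue as the paper does for $b=0$. If $\psi(0)=0$, then $f_G(z)=T_5(cz)/c$ is odd, so $i_2=0$. Then $G$ is complete, contradicting $\alpha(G)=5$.

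Once $i_1=5$ and $i_1=-5$ are excluded, $\psi(0)$ must be $\pm1$. This rules out $-T_5$ and gives $i_1=25$. The rest of your argument then goes through: the Taylor expansion at $1$, integrality of $k$, $k\le 6$, the exclusion of $k=6$ via $\alpha(K_{25}^c)=25$, and the converse via $\psi^{-1}([-1,1])=[-\tfrac4k,0]$.
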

	\begin{proof}
		Consider a graph $G$ with independence number five, whose reduced independence polynomial is
		\[ f_G(z)= i_1z +i_2z^2+i_3z^3 +i_4z^4 +i_5z^5. \]
		By  \cite[Theorem 1.1]{barik2021graphs} if the independence fractal $\mathscr{F}(G)$ is a line segment $M$, then the Julia set of $f_G$ coincides with $M$. From \cite[Theorem 3.2.4]{beardon2000iteration} (for any rational map $R$ having degree at least two, the Fatou set and the Julia set are completely invariant under $R$), this line segment $M$ is completely invariant under $f_G$, i.e.,
		\[ f_G(M) \subseteq M \quad \text{and} \quad f_G^{-1}(M)\subseteq M. \]
		% Let $w_0$, $l$, and $\psi$ denote, respectively, the midpoint, the length, and the angle of inclination of $M$ with respect to the positive real axis. Then there exists a linear map $\phi(z)=az+b$ that carries the interval $[-1,1]$ onto the segment $M$ where $a=\frac{le^{i \psi}}{2}, b=w_0$. Consequently, we have
		% \[
		% f_G = \phi \circ T_5(z) \circ  \phi^{-1},
		% \]
  %       is a quintic polynomial that leaves the interval $[-1,1]$ completely invariant. By Theorem \ref{TH1}, we then have $\phi^{-1}(f_G(\phi)) = \pm T_5$. We now determine the coefficients of $f_G$. Set $w = az + b$ and note that $T_5(z) = 16z^5 - 20z^3 + 5z$. Then,
  Let $w_0, l$ and $\psi$ denote, respectively, the midpoint, length, and angle of inclination of segment M relative to the positive real axis. When $a = \frac{le^{i \psi}}{2}$, and $b = w_0$, there is a linear map defined by $\phi(z) = az + b$, which converts the interval $[-1, 1]$ onto the segment $M$. As a result, we have $f_G = \phi \circ T_5(z) \circ \phi^{-1}$ which describes a quintic polynomial that maintains complete invariance of the interval $[-1, 1]$. We can infer from Theorem \ref{TH1} that $\phi^{-1}(f_G(\phi)) = \pm T_5$. The coefficients of $f_G$ will now be computed. Remember that $T_5(z) = 16z^5 - 20z^3 + 5z$, and let $w = az + b$. Then,
		\begin{align} \label{Eq1}
			f_G(w)
			&= a\Big[16\Big(\frac{w-b}{a}\Big)^5 - 20\Big(\frac{w-b}{a}\Big)^3 + 5\Big(\frac{w-b}{a}\Big)\Big] + b \notag\\
			&= \frac{16}{a^4}w^5 - \frac{80b}{a^4}w^4 + \Big(\frac{160b^2}{a^4}- \frac{20}{a^2}\Big)w^3 + \Big(\frac{-160b^3}{a^4}+\frac{60b}{a^2}\Big)w^2 \notag \\
			&\quad + \Big(\frac{80b^4}{a^4}-\frac{60b^2}{a^2}+5\Big)w +
			\Big(\frac{-16b^5}{a^4}+\frac{20b^3}{a^2}-4b\Big). 
		\end{align}
		As there is no constant term in the reduced independence polynomial, the final coefficient must equal zero. Therefore, we have the equation: \[ \frac{-16b^5}{a^4} + \frac{20b^3}{a^2} - 4b = 0. \] The solutions to this equation for \( b \) include \( 0, \pm a, \) and \( \pm \frac{a}{2} \). We will begin with the case \( b = 0 \). Substituting \( b = 0 \) into Equation \eqref{Eq1} results in \( i_2 = 0 \). Since \( i_k = 0 \) implies \( i_j = 0 \) for all \( j \ge k \), it follows that \( i_3 = i_4 = i_5 = 0 \). This contradicts the condition that \( i_5 \neq 0 \). Thus, we can conclude that \( b = 0 \) is not a valid solution.
% For the cases $b = a,\ a/2,\ -a/2,$ 
%  at least one of the coefficients $i_1,\dots,i_5$ becomes negative (for instance,
% $b = a \Rightarrow i_4 < 0,\quad
% b = a/2 \Rightarrow i_4 = -\frac{40}{a^3} < 0,\quad
% b = -a/2 \Rightarrow i_1 = 5 - 15 + 5 = -5 < 0.$)
% Since all coefficients must be non-negative, these three cases are also ruled out.
For the cases where \( b = a \), \( b = \frac{a}{2} \), and \( b = -\frac{a}{2} \), at least one of the coefficients \( i_1, i_2, i_3, i_4, i_5 \) becomes negative. For instance
$b = a \Rightarrow i_4 < 0,\quad b = a/2 \Rightarrow i_4 = -\frac{40}{a^3} < 0,\quad
 b = -a/2 \Rightarrow i_1 = 5 - 15 + 5 = -5 < 0.$
Since all coefficients must be non-negative, these three cases are ruled out as valid options.
Finally in the case $b = -a$,
 all coefficients $i_1,\dots,i_5$ are positive. Plugging in $b=-a$ in \eqref{Eq1} yields $i_1 = 25$. The image  $\phi[-1,1]$ is $[-2a,0]$, which coincides with the known line segment attractor $[-4/k,0]$ provided $2a = 4/k$, i.e, $a = 2/k$, establishing the required reduced independence polynomial.

	According to \cite[Lemma 5]{manna2025connectedness} and \cite[Remark 3.2]{khetawat2025circles}, the reduced independence polynomials of graphs with an independence number of five, whose independence attractor is a line segment, are given by $f_G(z) = 25z + 50kz^2 + 35k^2z^3 + 10k^3z^4 + k^4z^5$. The corresponding independence polynomial is
	\begin{equation} \label{Eq2}
^kI_G(z)=I_G(z)=1+25z+50kz^2+35k^2z^3+10k^3z^4+k^4z^5, 
	\end{equation}
%     for some positive integer $k$. Since $i_1(G)=25$, $G$ has $25$ vertices. Any $j$-independent set is a $j$-subset of $V(G)$, so $i_j(G)\le\binom{25}{j}$ for all $j$. For $j=2, \, 50k \le \binom{25}{2} = 300 \implies k \le 6.$ If $k=6$, then $i_2(G) = 300 = \binom{25}{2}$, so every pair of vertices is independent and $G$ is edgeless. Then every $3$-subset is independent, giving $i_3(G) = \binom{25}{3} = 2300,$ but, the polynomial gives $i_3(G) = 35\cdot 6^2 = 1260,$
% a contradiction. Thus, $k \neq 6$ and since $k$ is a positive integer, we must have $k \in \{1,2,3,4,5\}$.
for some positive integer $k$. As $i_1=25$, $G$ has $25$ vertices. Since $i_2\leq \binom{25}{2}$, this gives $ 50k \le 300$ which implies that $k \le 6.$ If \( k = 6 \), then \( i_2 = 300 = \binom{25}{2} \), indicating that every pair of vertices is independent, meaning that \( G \) has no edges. In this case, $i_3$ equals $ \binom{25}{3} = 2300 $, while the polynomial yields \( i_3 = 35 \cdot 6^2 = 1260 \). This is a contradiction. Thus, $k \neq 6$ and since $k$ is a positive integer, we must have $k \in \{1,2,3,4,5\}$.\\
% matching the restriction found by Barik, Nayak, and Pradhan (2021) for the cubic case with independence number $3$.\\
For the backward implication, if \( f_G(z) = {}^k f_G(z) \) for some \( k \in \{1, 2, 3, 4,5\} \), then the map \( \phi_k(z) = \frac{2}{k}(z - 1) \) conjugates \( {}^k f_G(z) \) to \( T_5(z) = 16z^5 - 20z^3 + 5z \). More precisely, we have \( T_5(z) = \phi_k^{-1} \circ f_5 \circ \phi_k(z) \) for every \( z \) and for each \( k \in \{1, 2, 3, 4,5\} \).
The result now follows from Theorem \ref{TH1}.
    \end{proof}
We begin by searching for disconnected graphs whose independence polynomial is $^kI_G(z)$. 
	It is known that if $G \cup H$ denotes the disjoint union of two graphs $G$ and $H$, then $I_{G \cup H}(z) = I_G(z) I_H(z)$. More generally, the independence polynomial of a disconnected graph is equal to the product of the independence polynomials of its connected components \cite[Theorem 3.0.12]{hickman2002roots}. 
	
	Our procedure is as follows. First, we determine all possible factorizations of the polynomials $^kI_G(z)$ into factors
	with positive integer coefficients. Each such factor is a possible independence polynomial of
	a component graph. Next, we verify, for each such factor, whether there
	exists a graph whose independence polynomial equals that factor.\\
    The following result gives a connection between the independence attractor and the independence fractal of a graph $G$ with independence number five. 
	\begin{corollary}
		The independence attractor and the independence fractal of a graph $G$ with independence number five coincide if the independence fractal is a line segment.
	\end{corollary}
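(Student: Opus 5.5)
We use the trichotomy recalled in the introduction (Barik et al.): $\mathscr{A}(G)=\mathscr{F}(G)$ whenever $-1$ is not a root of $I_G$, or is only a simple root. It therefore suffices to show that $-1$ is never a multiple root of $I_G$ when the independence fractal is a line segment.

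By Theorem \ref{th2.1}, in that situation $I_G={}^kI_G$ as in \eqref{Eq2} for some $k\in\{1,2,3,4,5\}$. So the whole argument reduces to checking five explicit quintics. Rather than evaluate them one by one, we will use the conjugacy from the backward direction of the proof of Theorem \ref{th2.1}. There $f_G=\phi_k\circ T_5\circ\phi_k^{-1}$, where $\phi_k$ is the affine map sending $[-1,1]$ onto $[-4/k,0]$. From $\phi_k(z)=\frac{2}{k}(z-1)$ we get $\phi_k(1)=0$, and since $\phi_k(1)$ and $\phi_k(-1)$ are the endpoints of $[-4/k,0]$, also $\phi_k(-1)=-4/k$.

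The condition $I_G(w)=0$ is $f_G(w)=-1$. Writing $w=\phi_k(u)$, this becomes
\[
\tfrac{2}{k}\bigl(T_5(u)-1\bigr)=-1,\qquad\text{i.e.}\qquad T_5(u)=1-\tfrac{k}{2}.
\]
The value $w=-1$ corresponds to $u_0=\phi_k^{-1}(-1)=1-\tfrac{k}{2}$. Hence $-1$ is a root of $I_G$ exactly when $T_5(u_0)=u_0$. Because $\phi_k$ is affine and non-constant, $-1$ is a multiple root of $I_G$ exactly when in addition $T_5'(u_0)=0$.

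The critical points of $T_5$ are $\cos(j\pi/5)$ for $j=1,2,3,4$. These are irrational numbers, namely $\pm\frac{1\pm\sqrt5}{4}$. Every $u_0=1-k/2$ with $k\in\{1,\dots,5\}$ is rational: the values are $\tfrac12,0,-\tfrac12,-1,-\tfrac32$. So $T_5'(u_0)\neq0$ in every case, $-1$ is never a multiple root, and $\mathscr{A}(G)=\mathscr{F}(G)$.

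As a cross-check, one can evaluate directly: $-1$ is a root only when $T_5(u_0)=u_0$. This happens for $u_0=0$ ($k=2$) and $u_0=-1$ ($k=4$), and not for $k=1,3,5$. We could also compute $I_G(-1)$ and $I_G'(-1)$ from \eqref{Eq2}. For example, with $k=2$: $I_G(-1)=1-25+100-140+80-16=0$ and $I_G'(-1)=25-200+420-320+80=5\neq0$. The rationality argument is cleaner, though.

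The step that most needs care is the normalization of $\phi_k$. The paper writes $\phi_k$ with a specific formula, so we will first verify on the linear coefficient that $\phi_k^{-1}\circ f_G\circ\phi_k=T_5$ (rather than $-T_5$), and then confirm that $\phi_k^{-1}(-1)$ is rational. Rationality is all the argument uses, and it holds for any affine map with rational coefficients, so it is robust to the sign convention.
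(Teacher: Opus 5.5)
Your proof is correct, but it reaches the key fact by a different route from the paper's. Both arguments use the trichotomy to reduce the claim to showing that $-1$ is not a multiple root of ${}^kI_G$.

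The paper does this by direct evaluation of \eqref{Eq2}. It asserts that $-1$ is a root for every $k$ and checks $I_G'(-1)\neq 0$ case by case.

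You instead use $I_G'=f_G'$ together with the conjugacy $f_G=\phi_k\circ T_5\circ\phi_k^{-1}$. This conjugacy holds exactly with $+T_5$: expanding $\tfrac{2}{k}\bigl(T_5(1+\tfrac{k}{2}w)-1\bigr)$ gives $25w+50kw^2+35k^2w^3+10k^3w^4+k^4w^5$. From it you get $I_G'(-1)=T_5'(1-\tfrac{k}{2})$. This cannot vanish, because the critical points $\pm\frac{1\pm\sqrt5}{4}$ of $T_5$ are irrational.

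What your route buys:
\begin{enumerate}
\item It explains why the derivative is nonzero, rather than just computing it.
\item It works for every integer (indeed rational) $k$, without using $k\le 5$.
\item It never needs to decide whether $-1$ is actually a root. This matters, because the paper's assertion that $-1$ is a root for all five values is false at $k=5$, where $I_G(-1)=-24$.
\end{enumerate}
The paper's route, in exchange, gives explicit checkable values: $I_G'(-1)=-5,5,-5,25,275$ for $k=1,\dots,5$.

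Your cross-check, however, contains an error. The fixed points of $T_5$ are $0,\pm\tfrac12,\pm1$; for example, $T_5(\tfrac12)=\cos\tfrac{5\pi}{3}=\tfrac12$. So $-1$ is a root of ${}^kI_G$ for $k=1,2,3,4$, not only for $k=2,4$. Indeed $I_G(-1)=-(k-1)(k-2)(k-3)(k-4)$. This does not affect your proof, which only uses $T_5'(u_0)\neq 0$, but that sentence should be corrected.
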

	\begin{proof}
	From Equation \eqref{Eq2}, one can easily see that $-1$ is a root of $I_G(z)$ for all $k\in \{1,2,3,4,5\}$. A direct calculation further shows that $I'_G(-1) \neq 0$ for each $k\in \{1,2,3,4,5\}$, and hence $-1$ is a simple root of $I_G(z)$. The claim then follows from Theorem \ref{th2.1} together with \cite[Remark 2]{brown2003independence}, which states that for any graph $G$ whose independence number is five, the independence fractal coincides with the independence attractor whenever the latter is a line segment.
\end{proof}
	\section{Different Components of The Independence Polynomial} \label{sec 3}
   In this section, we examine the possible number of connected components in a disconnected graph whose independence polynomial is given by $^kI_G(z)=I_G(z)=1+25z+50kz^2+35k^2z^3+10k^3z^4+k^4z^5$. We have established that this number can be at most three. The following result makes this statement precise.
	% A disconnected graph $G$ with independence number five can have either two or three connected components. The following proposition makes this statement precise.
	%	\section{no disconnected graph of four and five components}
	\begin{proposition}\label{prop 1}
		If $G$ is a disconnected graph with independence number five and $\mathscr{A}(G)$ is a line segment, then $G$ has at most three connected components.
		\begin{enumerate}
			\item  When the graph has three components, its independence polynomial has the decomposition $(1+z)(1+12z+16z^2)(1+12z+16z^2)$.
			\item When the graph has two components, its independence polynomial has one of the following decompositions: $(1+12z+16z^2)(1+13z+28z^2+16z^3); (1+z)(1+24z+26z^2+9z^3+z^4); (1+z)(1+24z+76z^2+64z^3+16z^4); (1+z)(1+24z+126z^2+189z^3+81z^4)$; $(1+z)(1+24z+176z^2+384z^3+256z^4)$.
		\end{enumerate}
      Furthermore, $\mathscr{A}(G)$ is $[-1,0]$ if the independence polynomial decomposition is one of the following: $(1+z)(1+12z+16z^2)(1+12z+16z^2), (1+12z+16z^2)(1+13z+28z^2+16z^3), (1+z)(1+24z+176z^2+384z^3+256z^4)$; $\mathscr{A}(G)$ is $[-2,0]$ if the independence polynomial has the decomposition $(1+z)(1+24z+76z^2+64z^3+16z^4)$; $\mathscr{A}(G)$ is $[-4/3,0]$ if the independence polynomial has the decomposition $(1+z)(1+24z+126z^2+189z^3+81z^4)$; and $\mathscr{A}(G)$ is $[-4,0]$ if the independence polynomial has the decomposition $(1+z)(1+24z+26z^2+9z^3+z^4)$.
	\end{proposition}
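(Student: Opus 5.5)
Factor ${}^kI_G(z)=1+25z+50kz^2+35k^2z^3+10k^3z^4+k^4z^5$ explicitly using the Chebyshev conjugacy of Theorem~\ref{th2.1}. Since $f_G=\phi_k\circ T_5\circ\phi_k^{-1}$ with $\phi_k(z)=\frac{2}{k}(z-1)$, the roots of $I_G=f_G+1$ correspond to the solutions of $T_5(u)=\phi_k^{-1}(-1)=1-\frac{k}{2}$. For $k=4$ this is $T_5(u)=-1$. The identity $T_5(u)+1=(u+1)(4u^2-2u-1)^2$ gives
\[
{}^4I_G(z)=(1+4z)(1+10z+20z^2)^2
\]
after rescaling. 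For $k=2$ the right-hand side is $0$, and $T_5(u)=u(16u^4-20u^2+5)$ gives ${}^2I_G(z)=(1+2z)(1+20z+\dots)$. For the remaining $k$, I will determine factorizations over $\mathbb{Z}[z]$ directly: Gauss's lemma lets me restrict to factors with constant term $1$ and integer coefficients, and the rational root theorem identifies linear factors $1+cz$ with $c\mid k^4$. Since $-1$ is a root for every $k$ (as in the corollary), the factor $(1+z)$ always splits off. This leaves a quartic cofactor $1+24z+\dots+k^4z^4$, which is exactly the quartic list in the statement.

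Next I check which factors can be independence polynomials of connected components, using the necessary conditions on any $I_H$. All coefficients must be positive. The constant term is $1$, the linear coefficient is the number of vertices $n$, and $i_2\le\binom{n}{2}$. If $i_2=\binom n2$ then $H$ is edgeless, so $I_H=(1+z)^n$, and a connected edgeless graph is $K_1$. Further constraints come from Kruskal--Katona/log-concavity type bounds on $i_j$ relative to $i_{j-1}$, and $I_H$ has degree $\alpha(H)$.

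Since $\alpha$ is additive over components and equals $5$, the number of components is at most $5$. Each component with $\alpha=1$ is complete, contributing $1+nz$. I will rule out four or five components by showing that ${}^kI_G$ does not have enough linear or quadratic factors with admissible coefficients. Concretely, I enumerate the irreducible factorization for each $k\in\{1,\dots,5\}$ and note that the only linear factors with real integer data are $1+z$, plus $1+kz$ style factors that are rejected or absorbed. Then I examine all groupings of the irreducible factors into $r\le 3$ parts with degrees summing to $5$. For example, for $k=4$ the square $(1+10z+20z^2)^2$ must be regrouped. The listed $(1+12z+16z^2)$ factors suggest the relevant case is $k=1$ with an appropriate rescaling. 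I will recompute this carefully, since the listed decompositions must match a direct expansion.

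Finally, each decomposition inherits the attractor of its value of $k$, namely $\mathscr{A}(G)=[-4/k,0]$ from Theorem~\ref{th2.1} and the corollary. Identifying $k$ via $i_5=k^4$ (or $i_2=50k$) of the product yields the interval assignments stated.

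The main obstacle is the exhaustive factorization step. I must certify irreducibility over $\mathbb{Z}$ of the remaining quadratic, cubic and quartic pieces for each $k$, so that no further splittings exist. I would first try reducing modulo small primes, and otherwise compare against the explicit cyclotomic-like factorization of $T_5(u)-c$ coming from the Chebyshev structure.
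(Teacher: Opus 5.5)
Your strategy is to factor ${}^kI_G$ into irreducibles in $\mathbb{Z}[z]$ through the Chebyshev conjugacy and then group the factors into components. This differs from the paper, which fixes a degree pattern for the components and matches coefficients case by case in tables. Carried out correctly, your route is the stronger one. By Gauss's lemma and unique factorization, the number of components is at most the number of irreducible factors counted with multiplicity, and every degree pattern is handled at once. By contrast, the paper never treats the three-component pattern $(1,1,3)$. Its stated reason for excluding five components also fails at $k=4$, where $(16,4,2,2,1)$ has sum $25$ and product $256$.

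As written, however, your computations are wrong at several points. With $u=\frac{k}{2}z+1$ one has ${}^kI_G(z)=\frac{2}{k}\bigl(T_5(u)-1+\frac{k}{2}\bigr)$.
For $k=4$ this gives $u+1=2(1+z)$ and $4u^2-2u-1=1+12z+16z^2$, so ${}^4I_G(z)=(1+z)(1+12z+16z^2)^2$. Your $(1+4z)(1+10z+20z^2)^2$ has linear coefficient $24$ and cannot be right. In particular the factors $1+12z+16z^2$ come from $k=4$, not $k=1$ as you guess; that guess would have produced the attractor $[-4,0]$ instead of $[-1,0]$.
For $k=2$, $u=1+z$ and ${}^2I_G(z)=(1+z)(16u^4-20u^2+5)=(1+z)(1+24z+76z^2+64z^3+16z^4)$, with no factor $1+2z$.
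Finally, $(1+z)$ does not always split off. We have ${}^kI_G(-1)=-(k-1)(k-2)(k-3)(k-4)$, which is $-24$ at $k=5$; the paper's corollary is wrong on this point too. So your claim would produce a fifth quartic that is not in the statement. In fact $5\,{}^5I_G(w/5)=w^5+10w^4+35w^3+50w^2+25w+5$ is Eisenstein at $5$. Hence ${}^5I_G$ is irreducible and $k=5$ yields no disconnected graph.

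The irreducibility step you defer is exactly what proves ``at most three components'', so it cannot be left open. It closes via the same Chebyshev structure. Write $u=\cos\theta$. The roots of $T_5(u)=1-\frac{k}{2}$, other than the one corresponding to $z=-1$, are the numbers $\cos(2\pi j/n)$ with $j$ running over $(\mathbb{Z}/n\mathbb{Z})^\times/\{\pm1\}$. Here $n=30,20,15$ for $k=1,2,3$, and $n=10$ for $k=4$ (where each such root is double). Since $\cos(2\pi/n)$ has degree $\varphi(n)/2$ over $\mathbb{Q}$, the quartic cofactors for $k=1,2,3$ and the quadratic $1+12z+16z^2$ are irreducible.

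So the irreducible factorizations are:
\begin{itemize}
\item $(1+z)Q$ with $Q$ an irreducible quartic, for $k=1,2,3$;
\item $(1+z)(1+12z+16z^2)^2$, for $k=4$;
\item an irreducible quintic, for $k=5$.
\end{itemize}
Their groupings into at least two factors are precisely the listed decompositions, noting that $(1+z)(1+12z+16z^2)=1+13z+28z^2+16z^3$. Three components occur only for $k=4$, and reading $k$ off $i_5=k^4$ gives the stated attractors $[-4/k,0]$. Your planned check of which factors are realizable by connected graphs is unnecessary, since the proposition only asserts necessary conditions.
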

	\begin{proof}
Suppose $G$ consists of five components $H_i$, each having $v_i$ vertices for $i = 1,2,3,4,5$. Then the independence polynomial of every $H_i$ has to be linear,
\begin{align*}
			^kI_G(z) 
			&= (1+v_1z)(1+v_2z)(1+v_3z)(1 +v_4z)(1+v_5z)\\
			&=1+(v_1+v_2+v_3+v_4+v_5)z\\
			&\quad +(v_1v_2+v_1v_3+v_1v_4+v_1v_5+v_2v_3+v_2v_4+v_2v_5+v_3v_4+v_3v_5+v_4v_5)z^2 \\
			&\quad+(v_1v_2v_3+v_1v_2v_4+v_1v_2v_5+v_1v_3v_4+v_1v_3v_5\\
			&\quad +v_1v_4v_5+v_2v_3v_4+v_2v_3v_5+v_3v_4v_5+v_2v_4v_5)z^3\\
			&\quad + (v_1v_2v_3v_4+v_1v_2v_3v_5+v_2v_3v_4v_5+v_1v_3v_4v_5+v_1v_2v_4v_5)z^4+(v_1v_2v_3v_4v_5)z^5.
		\end{align*}
		Since $^kI_G(z) =1+25z+50kz^2+35k^2z^3+10k^3z^4+k^4z^5$, any admissible 5-tuple\\ $(v_1, v_2, v_3, v_4, v_5)$ must satisfy the following system of equations
		\begin{align}
			&v_1 + v_2 + v_3 + v_4 + v_5 = 25,\\
			&v_1v_2 + v_1v_3 + v_1v_4 + v_1v_5 + v_2v_3 + v_2v_4 + v_2v_5 + v_3v_4 + v_3v_5 + v_4v_5 = 50k,\\
			&v_1v_2v_3 + v_1v_2v_4 + v_1v_2v_5 + v_1v_3v_4 + v_1v_3v_5 + v_1v_4v_5 + v_2v_3v_4 \notag\\
			&+ v_2v_3v_5 + v_2v_4v_5 + v_3v_4v_5 = 35k^2,\\
			&v_1v_2v_3v_4 + v_1v_2v_3v_5 + v_1v_2v_4v_5 + v_1v_3v_4v_5 + v_2v_3v_4v_5 = 10k^3,\\
			&v_1v_2v_3v_4v_5 = k^4,
		\end{align}
		for some $k \in \{1,2,3,4,5\}$. It is evident that for $k = 1,2,3,4,5$, there is no 5-tuple\\ $(v_1, v_2, v_3, v_4, v_5)$ that simultaneously satisfies
		$v_1 + v_2 + v_3 + v_4 + v_5 = 25
		$
		and
		$v_1v_2v_3v_4v_5 = k^4.$\\
		Hence, the graph $G$ cannot have five components.
		Suppose $G$ consists of four components $H_i$ with $v_i$ vertices for $i = 1,2,3,4$, three of them ($H_1, H_2$, and $H_3$) have independence number one, whereas $H_4$ has independence number two. Then, 
		\begin{align*}
			^kI_G(z)
			&= (1+v_1z)(1 +v_2z)(1+v_3z)(1 +v_4z +mz^2)\\
			&=1+(v_1+v_2+v_3+v_4)z+(v_1v_2+v_1v_3+v_1v_4+v_2v_3+v_2v_4+v_3v_4+m)z^2\\
			&\quad +(v_1v_2v_3+v_1v_2v_4+v_2v_3v_4+v_1v_3v_4+v_1m+v_2m+v_3m)z^3\\
			&\quad+(v_1v_2v_3v_4+v_1v_2m+v_1v_3m+v_2v_3m)z^4+(v_1v_2v_3m)z^5.
		\end{align*}
		%	For the remaining values of $k$, we first list all the values of $v_1,v_2,v_3,m$ satisfying
		%	$v_1v_2v_3m = k^4$, discarding the cases where any of $v_1,v_2$ or $v_3$ is greater than or equal
		%	to 25. Then we find $v_4$ using $v_1 + v_2 + v_3 +v_4 = 25$ and finally we check if the
		%	other equations are satisfied or not. This is done for k = 2,3,4 in Tables 1, 2, 3
		%	respectively.
		%	For k = 2, $n1n2n3 +mn1 +mn2 = 32$. But it is clear from Table 1 that this is
		%	not possible.
		%	
		Each possible combination of $(m, v_1,v_2,v_3,v_4)$ must satisfy
		\begin{align}
			&v_1+v_2+v_3+v_4=25\\ 	
	&v_1v_2+v_1v_3+v_1v_4+v_2v_3+v_2v_4+v_3v_4+m=50k,\\
&v_1v_2v_3+v_1v_2v_4+v_2v_3v_4+v_1v_3v_4+v_1m+v_2m+v_3m=35k^2,\\
			&v_1v_2v_3v_4+v_1v_2m+v_1v_3m+v_2v_3m=10k^3,\\
			&v_1v_2v_3m=k^4,
		\end{align}
		%	\begin{equation}
			%&	v_1+v_2+v_3+v_4=25\\
			%&	v_1v_2+v_1v_3+v_1v_4+v_2v_3+v_2v_4+v_3v_4+m=50k,\\
			%&	v_1v_2v_3+v_1v_2v_4+v_2v_3v_4+v_1v_3v_4+v_1m+v_2m+v_3m=35k^2,\\
			%&	v_1v_2v_3v_4+v_1v_2m+v_1v_3m+v_2v_3m=10k^3,\\
			%&	v_1v_2v_3m=k^4
			%	\end{equation}
		for some $k \in \{1,2,3,4,5\}$.  
		For $k=1$, we must have $(v_1,v_2,v_3,m)=(1,1,1,1)$ since $v_1v_2v_3m=1$. This gives $v_4=22$. Also, $
		v_1v_2+v_1v_3+v_1v_4+v_2v_3+v_2v_4+v_3v_4+m=70,
		$ whereas the second condition requires this sum to be $50$, so $k=1$ is impossible. To handle the remaining values of $k$, we first enumerate all quadruples $(v_1,v_2,v_3,m)$ such that $v_1v_2v_3m=k^4$. We discard any cases where at least one of $v_1,v_2,$ or $v_3$ is $\geq 25$.
		For each remaining case, we solve $v_1+v_2+v_3+v_4=25$ to obtain $v_4$ and then check whether the other equations are satisfied. Tables (\ref{t1}- \ref{T3}) summarize the outcomes for $k=2,3,4$ and $5$, respectively. \\
		For $k=2$, the second condition becomes $v_1v_2+v_1v_3+v_1v_4+v_2v_3+v_2v_4+v_3v_4+m=100.$
		Table \ref{t1} shows that no choice of $(v_1,v_2,v_3,m,v_4)$ yields this value, so $k=2$ does not produce a valid solution.
		%\begin{center}
		%	\begin{tabular}{|c|c|c|c|c|c|}
			%		\hline
			%		$m$ & $v_1$ & $v_2$ & $v_3$ & $v_4=25-(v_1+v_2+v_3)$ &
			%		$\begin{aligned}
				%			&v_1v_2+v_1v_3+v_1v_4+v_2v_3\\
				%			&+v_2v_4+v_3v_4+m
				%		\end{aligned}$ \\ \hline
			%			16 & 1 & 1 & 1& 22& 85 \\ \hline
			%		1 & 16 & 1 & 1& 7 & 160 \\ \hline
			%		1 & 1 & 16 &1 &7 &  160\\ \hline
			%		1 &1  &1  &16 &7 & 160 \\ \hline
			%		8&2  &1  &1 &21 & 97 \\ \hline
			%		8& 1 &2  &1 &21 & 97 \\ \hline
			%		8&1  &1  &2 &21 & 97 \\ \hline
			%		1&8  &2  &1 &14 & 181 \\ \hline
			%		1& 1 & 2 &8 & 14& 181  \\ \hline
			%		2& 8 & 1 &1 & 15& 169  \\ \hline
			%		2& 1 & 8 &1 & 15& 169  \\ \hline
			%		2& 1 & 1 &8 & 15&  169 \\ \hline
			%		4& 4 &1  &1 & 19& 127 \\ \hline
			%		4&1  &4  &1 & 19& 127 \\ \hline
			%		4&1  &1  &4 & 19& 127 \\ \hline
			%		1 &4  &4&1 &16 & 169 \\ \hline
			%		1&1  &4  &4 &16 & 169 \\ \hline
			%		1&4  &1  &4 &16 & 169 \\ \hline
			%		4 &2  &2  &1 & 20& 112 \\ \hline
			%		4&2  &1  &2 & 20& 112 \\ \hline
			%		4&1  &2  &2 & 20& 112 \\ \hline
			%		2&1  &4  &2 & 18& 142  \\ \hline
			%		2&1  &2  &4 & 18& 142 \\ \hline
			%		2&4  &1  &2 & 18& 142  \\ \hline
			%		2&4  &2  &1 & 18& 142  \\ \hline
			%		2&2  &4  &1 & 18& 142 \\ \hline
			%		2&2  &1  &4 & 18& 142  \\ \hline
			%		2&2 &2&2& 19& 128 \\ \hline
			%	\end{tabular}
		%\end{center}
		\begin{table}
			\centering
			\caption{Possible values of $v_1,v_2,v_3,v_4$ and $m$ for $k=2: v_1v_2+v_1v_3+v_1v_4+v_2v_3+v_2v_4+v_3v_4+m=100.$}
			\label{t1}
			\scriptsize
			\begin{tabular}{|c|c|c|c|c|c|c|}
				\hline
				$m$ & $v_1$ & $v_2$ & $v_3$ & $v_4=25-(v_1+v_2+v_3)$ & \begin{tabular}{c}
					$v_1v_2+v_1v_3+v_1v_4+v_2v_3$\\$+v_2v_4+v_3v_4+m$
				\end{tabular} \\ \hline
				16 & 1 & 1 & 1& 22& 85 \\ \hline
				1 & 16 & 1 & 1& 7 & 160 \\ \hline
				1 & 1 & 16 &1 &7 &  160 \\ \hline
				1 &1  &1  &16 &7 & 160\\ \hline
				8&2  &1  &1 &21 & 97 \\ \hline
				8& 1 &2  &1 &21 & 97  \\ \hline
				8&1  &1  &2 &21 & 97 \\ \hline
				1&8  &2  &1 &14 & 181  \\ \hline
				1& 1 & 2 &8 & 14& 181  \\ \hline
				2& 8 & 1 &1 & 15& 169 \\ \hline
				2& 1 & 8 &1 & 15& 169  \\ \hline
				2& 1 & 1 &8 & 15&  169\\ \hline
				4& 4 &1  &1 & 19& 127 \\ \hline
				4&1  &4  &1 & 19& 127 \\ \hline
				4&1  &1  &4 & 19& 127\\ \hline
				1 &4  &4&1 &16 & 169\\ \hline
				1&1  &4  &4 &16 & 169\\ \hline
				1&4  &1  &4 &16 & 169\\ \hline
				4 &2  &2  &1 & 20& 112\\ \hline
				4&2  &1  &2 & 20& 112 \\ \hline
				4&1  &2  &2 & 20& 112  \\ \hline
				2&1  &4  &2 & 18& 142 \\ \hline
				2&1  &2  &4 & 18& 142 \\ \hline
				2&4  &1  &2 & 18& 142 \\ \hline
				2&4  &2  &1 & 18& 142  \\ \hline
				2&2  &4  &1 & 18& 142 \\ \hline
				2&2  &1  &4 & 18& 142  \\ \hline
				2&2 &2&2& 19& 128  \\ \hline
			\end{tabular}
		\end{table}
		For $k=3$, we have $v_1v_2+v_1v_3+v_1v_4+v_2v_3+v_2v_4+v_3v_4+m=150$ and $v_1v_2v_3+v_1v_2v_4+v_2v_3v_4+v_1v_3v_4+v_1m+v_2m+v_3m=315.$ As indicated in Table \ref{t2}, these conditions cannot be satisfied.
		\begin{table}[h]
			\centering
			\caption{Possible values of $v_1,v_2,v_3,v_4$ and $m$ for $k=3: v_1v_2+v_1v_3+v_1v_4+v_2v_3+v_2v_4+v_3v_4+m=150.$ }
			\label{t2}
			\scriptsize
			\begin{tabular}{|c|c|c|c|c|c|c|c|}
				\hline
				$m$ & $v_1$ & $v_2$ & $v_3$ &
				\begin{tabular}{c}
					$v_4=25-$ \\ $(v_1+v_2+v_3)$ 
				\end{tabular}& \begin{tabular}{c}
					$v_1v_2+v_1v_3$ \\ $+v_1v_4+v_2v_3$ \\ $+v_2v_4+v_3v_4+m$
				\end{tabular}&
				\begin{tabular}{c}
					$v_1v_2v_3+v_1v_2v_4$ \\ $+v_2v_3v_4+v_1v_3v_4$ \\ $+v_1m+v_2m+v_3m$
				\end{tabular}\\ \hline
				81& 1&1&1&22&150&310 \\ \hline
				% 1& 81&1&1&22&150&310& Not possible \\ \hline
				% 1& 1&81&1&22&150&310& Not possible \\ \hline
				% 1& 1&1&81&22&150&310& Not possible \\ \hline
				27& 3&1&1&20&134&-- \\ \hline
				27& 1&3&1&20&13&-- \\ \hline
				27& 1&1&3&20&134&-- \\ \hline
				9& 9&1&1&14&182&-- \\ \hline
				9& 1&9&1&14&182&-- \\ \hline
				9&1 &1&9&14&182&-- \\ \hline
				1& 9&9&1&6&160&-- \\ \hline
				1& 9&1&9&6&160&--\\ \hline
				1& 1&9&9&6&160&--\\ \hline	
				9& 3&3&1&18&150&342 \\ \hline
				9& 3&1&3&18&150&342 \\ \hline
				9& 1&3&3&18&150&342 \\ \hline
				3& 1&3&9&12&195&-- \\ \hline
				3& 1&9&3&12&195&--\\ \hline
				3& 9&3&1&12&195&--\\ \hline
				3& 9&1&3&12&195&-- \\ \hline	
				3& 1&3&9&12&195&-- \\ \hline
				3& 3&1&9&12&195&-- \\ \hline
				3& 3&9&1&12&195&-- \\ \hline		
			\end{tabular}
		\end{table}
		For $k=4,$ the equation $v_1v_2+v_1v_3+v_1v_4+v_2v_3+v_2v_4+v_3v_4+m=200$ and $v_1v_2v_3+v_1v_2v_4+v_2v_3v_4+v_1v_3v_4+v_1m+v_2m+v_3m$ must hold. However, Table \ref{t3} indicates that no such configuration exists.
		\begin{table}
			\centering
			\caption{Possible values of $v_1,v_2,v_3,v_4$ and $m$ for $k=4: v_1v_2+v_1v_3+v_1v_4+v_2v_3+v_2v_4+v_3v_4+m=200$.}
			\label{t3}
			\scriptsize
			\begin{tabular}{|c|c|c|c|c|c|c|c|}
				\hline
				$m$ & $v_1$ & $v_2$ & $v_3$ &
				\begin{tabular}{c}
					$v_4=25-$ \\ $(v_1+v_2+v_3)$ 
				\end{tabular}& \begin{tabular}{c}
					$v_1v_2+v_1v_3$ \\ $+v_1v_4+v_2v_3$ \\ $+v_2v_4+v_3v_4+m$
				\end{tabular}&
				\begin{tabular}{c}
					$v_1v_2v_3+v_1v_2v_4$\\$+v_2v_3v_4+v_1v_3v_4$\\$+v_1m+v_2m+v_3m$
				\end{tabular}\\	\hline
				256&1&1&1&22&325 &--\\ \hline
				128&2&1&1&21&217&--\\ \hline
				128&1&2&1&21&217&--\\ \hline
				128&1&1&2&21&217&--\\ \hline
				64&4&1&1&19&187&--\\ \hline
				64&1&4&1&19&187&--\\ \hline
				64&1&1&4&19&187&--\\ \hline
				64&2&2&1&20&172&--\\ \hline
				64&1&2&2&20&172&--\\ \hline
				64&2&1&2&20&172&--\\ \hline
				32&8&1&1&15&199&--\\ \hline
				32&1&8&1&15&199&--\\ \hline
				32&1&1&8&15&199&--\\ \hline
				32&2&4&1&18&172&--\\ \hline
				32&2&1&4&18&172&--\\ \hline
				32&1&4&2&18&172&--\\ \hline
				32&4&2&1&18&172&--\\ \hline
				32&4&1&2&18&172&--\\ \hline
				32&2&2&2&19&158&--\\ \hline
				16&16&1&1&7&175&--\\ \hline
				16&1&16&1&7&175&--\\ \hline
				16&1&1&16&7&175&--\\ \hline
				16&8&2&1&14&196&--\\ \hline
				16&8&1&2&14&196&--\\ \hline
				16&2&8&1&14&196&--\\ \hline
				16&2&1&8&14&196&--\\ \hline
				16&1&2&8&14&196&--\\ \hline
				16&1&8&2&14&196&--\\ \hline
				16&4&4&1&16&184&--\\ \hline
				16&4&1&4&16&184&--\\ \hline
				16&1&4&4&16&184&--\\ \hline
				16&4&2&2&17&172&--\\ \hline
				16&2&4&2&17&172&--\\ \hline
				16&2&2&4&17&172&--\\ \hline
				8&8&2&2&13&200&596\\ \hline
				8&2&8&2&13&200&596\\ \hline
				8&2&2&8&13&200&596\\ \hline
				2&8&8&2&7&224&--\\ \hline
				2&2&8&8&7&224&--\\ \hline 
				8&4&4&2&15&190&--\\ \hline
				8&4&2&4&15&190&--\\ \hline
				8&4&4&2&15&190&--\\ \hline
				4&4&2&8&11&214&--\\ \hline
				4&4&8&2&11&214&--\\ \hline
				4&8&2&4&11&214&--\\ \hline
				4&8&4&2&11&214&--\\ \hline
				4&2&4&8&11&214&--\\ \hline
				4&2&8&4&11&214&--\\ \hline
				4&4&4&4&13&208&--\\ \hline
			\end{tabular}
		\end{table}
        For $k = 5$, the relations $v_1v_2+v_1v_3+v_1v_4+v_2v_3+v_2v_4+v_3v_4+m=250$ and $v_1v_2v_3+v_1v_2v_4+v_2v_3v_4+v_1v_3v_4+v_1m+v_2m+v_3m$ are required to hold. Nevertheless, Table \ref{T3} shows that no configuration satisfying these conditions exists.
        \begin{table}
			\centering
			\caption{Possible values of $v_1,v_2,v_3,v_4$ and $m$ for $k=5: v_1v_2+v_1v_3+v_1v_4+v_2v_3+v_2v_4+v_3v_4+m=250$.}
			\label{T3}
			\scriptsize
			\begin{tabular}{|c|c|c|c|c|c|c|c|}
				\hline
				$m$ & $v_1$ & $v_2$ & $v_3$ &
				\begin{tabular}{c}
					$v_4=25-$ \\ $(v_1+v_2+v_3)$ 
				\end{tabular}& \begin{tabular}{c}
					$v_1v_2+v_1v_3$ \\ $+v_1v_4+v_2v_3$ \\ $+v_2v_4+v_3v_4+m$
				\end{tabular}&
				\begin{tabular}{c}
					$v_1v_2v_3+v_1v_2v_4$\\$+v_2v_3v_4+v_1v_3v_4$\\$+v_1m+v_2m+v_3m$
				\end{tabular}\\	\hline
625&1&1&1&22&694&--\\ \hline
125&5&1&1&18&262&--\\ \hline
125&1&5&1&18&262&--\\ \hline
125&1&1&5&18&262&--\\ \hline
25&5&5&1&14&214&--\\ \hline
25&5&1&5&14&214&--\\ \hline
25&1&5&5&14&214&-- \\ \hline
5&5&5&5&10&230&--\\ \hline
                \end{tabular}
		\end{table}
		Hence, in none of the five cases can $^kI_G(z)$ be decomposed into four separate components.\\
		%\item
		We now suppose $G$ consists of three components $H_i$ with $v_i$ vertices for $i = 1,2,3$. Here, two components have independence number two, and the remaining one has independence number one. Assume 
		$^kI_{H_1}(z) = 1 + v_1 z$, $^kI_{H_2}(z) = 1 + v_2 z + m_1 z^2$, and $^kI_{H_3}(z) = 1 + v_3 z + m_2 z^2$ for some natural numbers $m_1,m_2$. Then,
		\begin{align*}
			^kI_G(z) 
			&= (1+v_1z)(1+v_2z+m_1z^2)(1+v_3z+m_2z^2)\\
			&= 1+(v_1+v_2+v_3)z+(m_1+m_2+v_1v_2+v_1v_3+v_2v_3)z^2\\ &\quad+(v_1m_2+v_2m_2+v_3m_1+v_1v_2v_3+v_1m_1)z^3+(m_1m_2+v_1v_2m_2+v_1v_3m_1)z^4\\
			&\quad+ v_1m_1m_2z^5.
		\end{align*}
		Every feasible value of $(m_1,m_2,v_1,v_2,v_3)$ must fulfill these five given equations
		\begin{align}
			&v_1+v_2+v_3=25,\\
			& m_1+m_2+v_1v_2+v_1v_3+v_2v_3=50k,\\
			&v_1m_2+v_2m_2+v_3m_1+v_1v_2v_3+v_1m_1=35k^2,\\
			& m_1m_2+v_1v_2m_2+v_1v_3m_1=10k^3,\\
			&v_1m_1m_2=k^4,
		\end{align}
		for some $k\in \{1,2,3,4,5\}$. For $k=1, (v_1,m_1,m_2)=(1,1,1)$, because $(v_1m_1m_2=1)$. This means that $v_2+v_3=24$, so $v_2v_3=24, v_2-v_3=\sqrt{480}$. This eliminates the possibility of an integer solution.\\
		For $k=2$, Table \ref{t4} shows that no integral solution exists that satisfies these five equations simultaneously.\\
		\begin{table}
			\centering
			\caption{Possible values of $v_1,v_2,v_3,m_1$ and $m_2$ for $k=2: m_1+m_2+v_1v_2+v_1v_3+v_2v_3=100$.}
			\label{t4}
			\scriptsize
			\begin{tabular}{|c|c|c|c|c|c|c|}
				\hline
				$v_1$ & $m_1$ & $m_2$ & $v_2+v_3=25-v_1$ & \begin{tabular}{c}
					$v_2v_3=100-(m_1+m_2+v_1(v_2+v_3))$
				\end{tabular} & $v_2-v_3$\\ \hline
				1&1&16&24&59&$\sqrt{340}$ \\ \hline
				1&16&1&24&59&$\sqrt{340}$  \\ \hline
				16&1&1&9&-46&--   \\ \hline
				1&2&8&24&66&$\sqrt{312}$  \\ \hline
				1&8&2&24&66&$\sqrt{312}$ \\ \hline
				2&1&8&23&45&$\sqrt{349}$   \\ \hline
				2&8&1&23&45&$\sqrt{349}$ \\ \hline
				8&1&2&17&-39&$\sqrt{133}$  \\ \hline
				8&2&2&17&-39&$\sqrt{133}$  \\ \hline
				2&2&4&23&48&$\sqrt{337}$  \\ \hline
				2&4&2&23&48&$\sqrt{337}$  \\ \hline
				4&2&2&21&12&$\sqrt{393}$ \\ \hline
				1&4&4&24&68&$\sqrt{304}$ \\ \hline
				4&1&4&21&11&$\sqrt{30}$  \\ \hline
				4&4&1&21&11&$\sqrt{30}$  \\ \hline
			\end{tabular}
		\end{table}
		For $k=3,$ Table \ref{t5} shows that no feasible integral solution exists for these five equations. \\
		\begin{table}
			\centering
			\caption{Possible values of $v_1,v_2,v_3,m_1$ and $m_2$ for $k=3: m_1+m_2+v_1v_2+v_1v_3+v_2v_3= 150$.}
			\label{t5}
			\scriptsize
			\begin{tabular}{|c|c|c|c|c|c|c|}
				\hline
				$v_1$ & $m_1$ & $m_2$ & $v_2+v_3=25-v_1$ &\begin{tabular}{c}
					$v_2v_3$\\ $=150-(m_1+m_2$\\ $+v_1$$(v_2+v_3))$
				\end{tabular}  & $v_2-v_3$&\begin{tabular}{c}
					$m_1m_2+v_1v_2m_2$\\$+v_1v_3m_1$
				\end{tabular}\\ \hline
				1&1&81&24&44&20&1884  \\ \hline
				1&3&27&24&96&$\sqrt{192}$&--\\ \hline
				1&27&3&24&96&$\sqrt{192}$&--\\ \hline
				3&1&27&22&56&$\sqrt{260}$&--\\ \hline
				3&27&1&22&56&$\sqrt{260}$&--\\ \hline
				1&9&9&24&45&$\sqrt{396}$&--\\ \hline
				9&1&9&16&-4&$\sqrt{240}$&--\\ \hline
				9&9&1&16&-4&$\sqrt{240}$&--\\ \hline
				3&3&9&22&72&14&81\\ \hline
				9&3&3&16&0&16&441 \\ \hline
			\end{tabular}
		\end{table}
		Similarly, Table \ref{t6} lists all possible values of $v_1, m_1,m_2$ for $k=4$. $v_1m_1m_2=256$ gives one possibility of $(v_1,m_1,m_2)=(1,16,16)$. Thus, $v_1m_2+v_2m_2+v_3m_1+v_1v_2v_3+v_1m_1=560$ and $ m_1m_2+v_1v_2m_2+v_1v_3m_1=640$ gives $(v_1,v_2,v_3) \in \{(1,12,12)\}$. Consequently, $(1+z)(1+12z+16z^2)(1+12z+16z^2)$ is a possible component of $G$.
		\begin{table}
			\centering
			\caption{Possible values of $v_1,v_2,v_3,m_1$ and $m_2$ for $k=4: m_1+m_2+v_1v_2+v_1v_3+v_2v_3=200$.}
			\label{t6}
			\scriptsize
			\begin{tabular}{|c|c|c|c|c|c|c|c|}
				\hline
				\textbf{$v_1$} & $m_1$& $m_2$ & \begin{tabular}{c}
				    $v_2+v_3=$\\ $25-v_1$ 
				\end{tabular}&\begin{tabular}{c}
					$v_2v_3=$\\ $200-(m_1+m_2+$\\ $v_1$$(v_2+v_3))$
				\end{tabular}  & $v_2-v_3$& \begin{tabular}{c}
					$m_1m_2$\\$+v_1v_2m_2$ \\ $+v_1v_3m_1$
				\end{tabular} & Independence polynomial \\ \hline
				% 1&1&256&24&-81&30&7165 & Not possible\\ \hline
                1&1&256&24&--&--&-- & Not possible\\ \hline
				1&2&128&24&46&$\sqrt{392}$&-- & Not possible\\ \hline
				1&128&2&24&46&$\sqrt{392}$&--& Not possible\\ \hline
				2&128&1&23&24&$\sqrt{429}$&--&Not possible \\ \hline
				2&1&128&23&24&$\sqrt{429}$&--& Not possible \\ \hline
				1&4&64&24&108&$\pm12$&1432,712& Not possible \\ \hline
				1&64&4&24&108&$\pm12$&1432,712 & Not possible\\ \hline
				4&1&64&21&51&$\sqrt{237}$&-- & Not possible\\ \hline
				4&64&1&21&51&$\sqrt{237}$&-- &Not possible\\ \hline
				1&8&32&24&136&$\sqrt{32}$&-- & Not possible\\ \hline
				1&32&8&24&136&$\sqrt{32}$&-- & Not possible\\ \hline
				8&1&32&17&31&$\sqrt{165}$&-- &Not possible\\ \hline
				8&32&1&17&31&$\sqrt{165}$&-- & Not possible\\ \hline
				1&\textbf{16}&16&24&144&0&640&\begin{tabular}{c}
					$(1+z)(1+12z+16z^2)$\\ $(1+12z+16z^2)$
				\end{tabular}\\ \hline
				16&1&16&9&39&$\sqrt{-75}$&--& Not possible \\ \hline
				16&16&1&9&39&$\sqrt{-75}$&-- & Not possible\\ \hline
				2&2&64&23&88&$\sqrt{177}$&-- &Not possible\\ \hline
				2&64&2&23&88&$\sqrt{177}$&-- &Not possible\\ \hline
				2&4&32&23&118&$\sqrt{57}$&-- & Not possible\\ \hline
				2&32&4&23&118&$\sqrt{57}$&-- & Not possible\\ \hline
				4&2&32&21&82&$\sqrt{113}$&-- &Not possible \\ \hline
				4&32&2&21&82&$\sqrt{113}$&-- & Not possible\\ \hline
				2&8&16&23&130&$\pm3$&704,656 & Not possible\\ \hline
				2&16&8&23&130&$\pm3$&704,656 & Not possible\\ \hline
				8&2&16&17&46&$\sqrt{129}$&-- & Not possible\\ \hline
				8&16&2&17&46&$\sqrt{129}$&-- & Not possible\\ \hline
				16&8&2&9&46&$\sqrt{-103}$&-- & Not possible\\ \hline
				16&2&8&9&46&$\sqrt{-103}$&-- & Not possible\\ \hline
				4&4&16&21&96&$\sqrt{57}$&-- & Not possible\\ \hline
				4&16&4&21&96&$\sqrt{57}$&-- & Not possible\\ \hline
				16&4&4&9&48&$\sqrt{-111}$&--&Not possible \\ \hline
				4&8&8&21&100&$\sqrt{41}$&-- & Not possible\\ \hline
				8&4&8&17&52&$\pm9$&992, 704 &Not possible\\ \hline
				8&8&4&17&52&$\pm9$&992, 704 & Not possible\\ \hline
				
			\end{tabular}
		\end{table}
       For $k=5,$ Table \ref{T6} indicates that no feasible solution exists that satisfies all five equations simultaneously.
        \begin{table}
			\centering
			\caption{Possible values of $v_1,v_2,v_3,m_1$ and $m_2$ for $k=5: m_1+m_2+v_1v_2+v_1v_3+v_2v_3=250$.}
			\label{T6}
			\scriptsize
			\begin{tabular}{|c|c|c|c|c|c|c|}
				\hline
			\textbf{$v_1$} & $m_1$& $m_2$ & $v_2+v_3=25-v_1$ &\begin{tabular}{c}
			$v_2v_3=$\\ $250-(m_1+m_2+$\\ $v_1$$(v_2+v_3))$
		\end{tabular}  & $v_2-v_3$& \begin{tabular}{c}
					$m_1m_2+v_1v_2m_2$ \\ $+v_1v_3m_1$
		\end{tabular} \\ \hline
	1&1&625&24&--&--&-- \\ \hline
    1&5&125&24&96&$\sqrt{80}$&-- \\ \hline
5&5&25&20&120&$\sqrt{-80}$&-- \\ \hline
5&25&5&20&120&$\sqrt{-80}$&-- \\ \hline
  \end{tabular}
		\end{table}
		%	\item 
		
		If $G$ has two connected components $H_1$ and $H_2$ with $v_1$ and $v_2$ vertices respectively, then there are two possible distributions of their independence numbers: either one component has independence number two and the other three, or one has independence number one and the other four.\\
		%\medskip
		\underline{Case I:}
		Assume that the independence numbers of $H_1$ and $H_2$ are two and three, respectively. Suppose further that
		\[I_{H_1}(z) = 1 + v_1 z + m_1 z^2,\quad I_{H_2}(z) = 1 + v_2 z + m_2 z^2 + m_3 z^3\]
		for some natural numbers $m_1, m_2$ and $m_3$. Then
		\begin{align*}
			^kI_G(z)
			&= (1 + v_1 z + m_1 z^2)(1 + v_2 z + m_2 z^2 + m_3 z^3)\\
			&= 1 + (v_1 + v_2)z + (m_1 + m_2 + v_1 v_2)z^2 \\
			&\quad + (a m_2 + b m_1 + m_3)z^3
			+ (a m_3 + m_1 m_2)z^4 + m_1 m_3 z^5.
		\end{align*}
		The admissible values of $v_1, v_2, m_1, m_2,$ and $m_3$ must satisfy
		\begin{align} 
			&	v_1 + v_2 = 25,\label{eq1}\\
			&	m_1 + m_2 + v_1 v_2 = 50k,\label{eq2}\\
			&	v_1 m_2 + v_2m_1 + m_3 = 35k^2,\label{eq3}\\
			&	v_1 m_3 + m_1 m_2 = 10k^3,\label{eq4}\\
			&	m_1 m_3 = k^4,\label{eq5}
		\end{align}
		for some $k \in \{1,2,3,4,5\}$. For $k = 1$, we have $m_1 m_3 = 1$. This implies $v_1 + m_2 = 10$ and this leads to the quadratic equation $v_1^2 - 24 v_1 + 39 = 0$, which yields $v_1 = \sqrt{105}$. Hence, no integer solution arises in this case. For $k=2$, Table \ref{t7} shows that no integral solution exists for these equations.
		\begin{table}
			\centering
			\caption{Possible values of $v_1,v_2,m_1,m_2$ and $m_3$ for $k=2: m_1 + m_2 + v_1 v_2 = 100$.}
			\label{t7}
			%\scriptsize
			\begin{tabular}{|c|c|c|c|c|c|}
				\hline
				$m_1$ & $m_3$ & $m_2=\frac{80-v_1m_3}{m_1}$ &
				$m_1+m_2+v_1v_2$&$v_1 m_2 + v_2m_1 + m_3$ \\ \hline
				1&16&$80-16v_1$& $v_1^2-9v_1+19$&--\\ \hline
				2&8&$40-4v_1$& $v_1^2-21v_1+58$&--\\ \hline
				4&4&$20-v_1$& $v_1^2-24v_1+76$&--\\ \hline

			\end{tabular}
		\end{table}
		For $k=3$, all the possible values $v_1,v_2,m_1,m_2,m_3$ are provided in Table \ref{t8}. In this situation, no possible integral values exist.
		\begin{table}
			\centering
			\caption{Possible values of $v_1,v_2,m_1,m_2$ and $m_3$ for $k=3: m_1 + m_2 + v_1 v_2 = 150$.}
			\label{t8}
			%	\scriptsize
			\begin{tabular}{|c|c|c|c|c|c|}
				\hline
				$m_1$ & $m_3$ & $m_2=\frac{270-v_1m_3}{m_1}$ &
				$m_1+m_2+v_1v_2$&$v_1 m_2 + v_2m_1 + m_3$ \\ \hline
				1&81&$270-81v_1$& $v_1^2+56v_1-121$&--\\ \hline
				3&27&$90-9v_1$& $v_1^2-16v_1+59$&--\\ \hline
				9&9&$30-v_1$& $v_1^2-24v_1+119$&--\\ \hline

			\end{tabular}
		\end{table}
		For $k=4$, all possible values of $v_1,v_2,m_1,m_2,m_3$ are provided in Table \ref{t9}. After simplification we obtain $(v_1,v_2)=(12,13)$. Also, the equations \eqref{eq1}-\eqref{eq5} are satisfied simultaneously. Moreover, we obtain a possible component of the independence polynomial $G$ as $(1+12z+16z^2)(1+13z+28z^2+16z^3)$.
		\begin{table}
			\centering
			\caption{Possible values of $v_1,v_2,m_1,m_2$ and $m_3$ for $k=4: m_1 + m_2 + v_1 v_2 = 200$.}
			\label{t9}
			%	\scriptsize
			\begin{tabular}{|c|c|c|c|c|c|}
				\hline
				$m_1$ & $m_3$ & $m_2=\frac{640-v_1m_3}{m_1}$ &
				$m_1+m_2+v_1v_2$& \begin{tabular}{c}
				   $v_1 m_2 +$\\$ v_2m_1 + m_3$
				\end{tabular}& Independence polynomial \\ \hline
				1&256&$640-256v_1$& $v_1^2+231v_1-442$&--& Not possible\\ \hline
				2&128&$320-64v_1$& $v_1^2+9v_1-140$&--&Not possible\\ \hline
				4&64&$160-16v_1$& $v_1^2-9v_1+36$&--& Not possible\\ \hline
				8&32&$80-4v_1$& $v_1^2-21v_1+112$&--& Not possible\\ \hline
				16&16&$40-v_1$& $v_1^2-24v_1+144$&560&\begin{tabular}{c}
					$(1+12z+16z^2)$\\$(1+13z+28z^2+16z^3)$
				\end{tabular}\\ \hline

			\end{tabular}
		\end{table}
		For $k=5$, all the possible values $v_1,v_2,m_1,m_2,m_3$ are given in Table \ref{T9}. In this situation, possible integral values exist.\\
        \begin{table}
			\centering
			\caption{Possible values of $v_1,v_2,m_1,m_2$ and $m_3$ for $k=5: m_1 + m_2 + v_1 v_2 = 250$.}
			\label{T9}
			%	\scriptsize
			\begin{tabular}{|c|c|c|c|c|c|}
				\hline
				$m_1$ & $m_3$ & $m_2=\frac{1250-v_1m_3}{m_1}$ &
				$m_1+m_2+v_1v_2$&$v_1 m_2 + v_2m_1 + m_3$ \\ \hline
				1&625&$1250-625v_1$& $v_1^2-650v_1-1001$&--\\ \hline
                5&125&$250-25v_1$& $v_1^2-5$&--\\ \hline
                25&25&$50-5v_1$& $v_1^2-20v_1+175$&--\\ \hline
			\end{tabular}
		\end{table}
		\underline{Case II:}
		Assume that the independence numbers of $H_1$ and $H_2$ are one and four, respectively. Let
		\[I_{H_1}(z) = 1 + v_1 z,\quad I_{H_2}(z) = 1 + v_2 z + m_1 z^2 + m_2 z^3+ m_3z^4\]
		for some natural numbers $m_1, m_2$ and $m_3$. Then
		\begin{align*}
			^kI_G(z)
			&= (1 + v_1 z)(1 + v_2 z + m_1 z^2 + m_2 z^3 +m_3z^4)\\
			&= 1 + (v_1 + v_2)z + (m_1 +v_1 v_2)z^2 + (m_2 + v_1m_1)z^3\\
			&\quad + (v_2m_2+m_3)z^4 + v_1m_3 z^5.
		\end{align*}
		The admissible values of $v_1, v_2, m_1, m_2,$ and $m_3$ must satisfy
		\begin{align} 
			&	v_1 + v_2 = 25,\\
			&   m_1 +v_1 v_2= 50k,\\
			&	m_2 + v_1m_1 = 35k^2,\\
			&v_1m_2+m_3 = 10k^3,\\
			&	v_1m_3= k^4,
		\end{align}
		for some $k \in \{1,2,3,4,5\}$. When $k = 1$, we get $v_1m_3 = 1$, which gives $v_2 = 24$, $m_1 = 26$, and $m_2 = 9$. Also, the condition $v_1m_2 +m_3 = 10k^3$ is satisfied. As a result, the possible independence polynomial is $(1+z)(1+24z+26z^2+9z^3+z^4)$. For $k = 2$, Table \ref{t10} shows that the only integral solution is obtained when $(v_1, v_2) = (1, 24)$ and the independence polynomial is $(1+z)(1+24z+76z^2+64z^3+16z^4)$	
		
		\begin{table}
			\centering
			\caption{Possible values of $v_1,v_2,m_1,m_2$ and $m_3$ for $k=2: m_2 + v_1m_1 =140$.}
			\label{t10}
			%	\scriptsize
			\begin{tabular}{|c|c|c|c|c|c|}
				\hline
				$v_1$ & $m_3$ & $m_2=\frac{80-m_3}{v_1}$ &
				$m_1= 100-v_1v_2$&$m_2+v_1m_1$& Independence polynomial \\ \hline
				1&16&64& 76&140&\begin{tabular}{c}
					$(1+z)(1+24z+76z^2$\\$+64z^3+16z^4)$	\end{tabular} \\ \hline
				2&8&36& 54&164& Not possible\\ \hline
				%8&2&& -36&299\\ \hline
				4&4&19&16&67& Not possible\\ \hline
				
			\end{tabular}
		\end{table}
		For $k=3$, Table \ref{t11}, shows that the only integral solution is obtained when $(v_1, v_2) = (1,24)$ with independence polynomial is $(1+z)(1+24z+126z^2+189z^3+81z^4)$.
		%		 	we have $m_1 +v_1 v_2= 150, m_2 + v_1m_1 = 315, v_1m_2+m_3 = 270$ and $	v_1m_3= 81$. 
		\begin{table}
			\centering
			\caption{Possible values of $v_1,v_2,m_1,m_2$ and $m_3$ for $k=3: m_2 + v_1m_1 =315$.}
			\label{t11}
			%\scriptsize
			\begin{tabular}{|c|c|c|c|c|c|}
				\hline
				$v_1$ & $m_3$ & $m_2=\frac{270-m_3}{v_1}$ &
				$m_1= 150-v_1v_2$&$m_2+v_1m_1$& Independence polynomial \\ \hline
				1&81&189&126&315&\begin{tabular}{c}
					$(1+z)(1+24z+126z^2$\\$+189z^3+81z^4)$
				\end{tabular} \\ \hline
				3&27&81&84&333 & Not possible\\ \hline
				9&9&29&6&83&Not possible \\ \hline
				
			\end{tabular}
		\end{table}
		For $k=4$, Table \ref{t12} shows that the only integral solution exist when $(v_1,v_2)=(1,24)$ with $(1+z)(1+24z+176z^2+384z^3+256z^4)$ as independence polynomial.
		%	\end{enumerate}
	\begin{table}
		\centering
		\caption{Possible values of $v_1,v_2,m_1,m_2$ and $m_3$ for $k=4: m_2 + v_1m_1 =560$.}
		\label{t12}
		%	\scriptsize
		\begin{tabular}{|c|c|c|c|c|c|}
			\hline
			$v_1$ & $m_3$ & $m_2=\frac{640-m_3}{v_1}$ &
			$m_1= 200-v_1v_2$&$m_2+v_1m_1$& Independence polynomial \\ \hline
			1&256&384&176&560 &\begin{tabular}{c}
				$(1+z)(1+24z+176z^2$\\$+384z^3+256z^4)$
			\end{tabular} \\ \hline
			2&128&256&154&564& Not possible \\ \hline
			8&32&76&64&588&Not possible\\ \hline
			16&16&39&56&935&Not possible \\ \hline
			
		\end{tabular}
	\end{table}
    Table \ref{T12} shows that no solution exists for $k=5.$
\begin{table}
		\centering
		\caption{Possible values of $v_1,v_2,m_1,m_2$ and $m_3$ for $k=5: m_2 + v_1m_1 =875$.}
		\label{T12}
		%	\scriptsize
		\begin{tabular}{|c|c|c|c|c|}
			\hline
			$v_1$ & $m_3$ & $m_2=\frac{1250-m_3}{v_1}$ &
			$m_1= 250-v_1v_2$&$m_2+v_1m_1$ \\ \hline
			1&625&625&226&851 \\ \hline
			5&125&225&150&975\\ \hline      
    \end{tabular}
	\end{table}
\end{proof}
\textbf{Note:}	For $k \in \{1,2,3,4,5\}$ we obtain non-integer solutions of $v_1,v_2,m_1,m_2,m_3$ which we have deliberately not included in Tables (\ref{t7}-\ref{T12}).

\begin{remark}
    In admissible decompositions, the factor $(1+z)$ appears at most once, meaning that $-1$ is a simple root of $G$. Thus, the multiplicity of the root $-1$ does not occur.
\end{remark}

\section{graph construction}\label{sec 4}
We now find some of the disconnected graphs whose independence polynomials are mentioned in Proposition \ref{prop 1}. Basically, we find non-isomorphic connected graphs with the following independence polynomials: $(1+12z+16z^2); (1+13z+28z^2+16z^3); (1+24z+26z^2+9z^3+z^4);(1+24z+76z^2+64z^3+16z^4); (1+24z+126z^2+189z^3+81z^4); (1+24z+176z^2+384z^3+256z^4).$
\begin{remark}
    We are listing only some of the non-isomorphic connected graphs for illustration purposes (the graphs have been drawn using the SageMath tool; we have included only the image output, and the code has not been included).
\end{remark}
Following \cite{khetawat2025circles}, our next result determines some of the non-isomorphic connected graphs with the following independence polynomial.
\begin{lemma}
    Let $Q_1=(1+12z+16z^2), Q_2= (1+13z+28z^2+16z^3), Q_3= (1+24z+26z^2+9z^3+z^4), Q_4=(1+24z+76z^2+64z^3+16z^4), Q_5=(1+24z+126z^2+189z^3+81z^4)$ and $Q_6=(1+24z+176z^2+384z^3+256z^4)$.  Here, $Q_i$ denotes the component of the graph $G$.
\end{lemma}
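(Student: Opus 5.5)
The statement only asks us to exhibit, for each $Q_i$, some connected graphs $H$ with $I_H=Q_i$. So the proof will be constructive: for each $i$ I will describe an explicit graph, check that it is connected, and compute its independence polynomial. Throughout I will use the dictionary from the introduction. The number of vertices is $i_1$, the number of non-edges is $i_2$, and, when $\alpha(H)=2$, $I_H$ is determined by $|V(H)|$ together with the number of edges of the complement $H^c$, where $H^c$ must be triangle-free.

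\textbf{The case $Q_1$.} Here $Q_1=1+12z+16z^2$, so I need $12$ vertices with $\alpha=2$. By the dictionary this means $H^c$ must be triangle-free with exactly $16$ edges. A natural choice is $H^c=K_{4,4}\cup 4K_1$, which is triangle-free and has $16$ edges. Then $H$ is two disjoint copies of $K_4$ together with a $K_4$ joined to everything. This $H$ is connected, and its only independent sets of size $2$ are the $16$ cross pairs between the two $K_4$'s. Other triangle-free choices of $H^c$, such as $2K_{2,4}$ or $C_4$-based examples, give further non-isomorphic examples. In each case connectivity of $H$ fails only when $H^c$ is a join, which is easy to exclude.

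\textbf{The remaining cases $Q_2,\dots,Q_6$.} For these I will use a clique-partition template. Let $\alpha=\deg Q_i$. Partition the $i_1$ vertices into $\alpha$ cliques $C_1,\dots,C_\alpha$ of chosen sizes $c_j$. With no cross edges this gives $\prod_j(1+c_jz)$, whose coefficients are at least those of $Q_i$ once the $c_j$ are large enough. I will then add cross edges between the cliques to cut the counts of independent $2$-, $3$- and $4$-sets down to the required values.

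The leading coefficient guides the choice of sizes and of the pruning. For $Q_6$ the leading coefficient is $256=4^4$, which suggests four "core" cliques of size $4$ plus $8$ extra vertices. Each extra vertex is made adjacent to enough of the core that it never enters a maximum independent set, yet it still contributes to $i_1,i_2,i_3$. For $Q_3$ the leading coefficient is $1$, so there must be a unique maximum independent set. I will start from a $4$-vertex independent set $S$ and attach $20$ further vertices, each dominated by $S$ in a controlled way. For $Q_4$ and $Q_5$ I will use the same core-plus-extras shape with cores $2^4$ and $3^4$. For $Q_2$ the core is three cliques with product $16$, e.g.\ sizes $(4,2,2)$ or $(4,4,1)$, inside $13$ vertices.

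For each candidate I will verify $i_2$ by counting non-edges and $i_3,i_4$ by inclusion–exclusion over the cliques. As the paper's Remark allows, I will confirm the result with SageMath's independence-polynomial routine and check connectivity there as well.

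\textbf{Main obstacle.} I expect the hard step to be matching all middle coefficients simultaneously. Each cross edge lowers $i_2$ by one, but it lowers $i_3$ and $i_4$ by amounts that depend on the rest of the structure. Hitting, for example, $(126,189)$ for $Q_5$ exactly is therefore a small integer-programming problem. My first attempt will be symmetric extra vertices, each adjacent to a fixed number of whole cliques, so that the coefficients become polynomial in a few parameters and can be solved by hand. If that fails, I will fall back on a SageMath search over graphs with the prescribed clique core, which is also how I would produce several non-isomorphic examples for each $Q_i$.
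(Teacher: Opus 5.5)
\textbf{Gap 1: only $Q_1$ is proved, and $Q_6$ cannot be proved at all.} Your treatment of $Q_1$ is complete and correct. The graph $H=(2K_4)+K_4$ is connected, its non-edges are exactly the $16$ edges of $K_{4,4}$, and $H^c$ is triangle-free.

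For $Q_2,\dots,Q_6$ you give only a search strategy, with a computer search as fallback. No graph is specified and no coefficient is checked, so these cases are not proved.

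More seriously, for $Q_6$ no search can succeed, because \emph{no connected graph has independence polynomial $Q_6$}. Since $Q_6=Q_1^2$, its real root closest to $0$, namely $(-3+\sqrt5)/8$, is a double root. For a connected graph $H$, however, the real root $-\lambda$ of $I_H$ closest to $0$ is always simple. To see this, start from $I_H'(z)=\sum_{v}I_{H-N[v]}(z)$. Then use the standard fact, proved by induction on $|V(H)|$ from $I_H=I_{H-u}+z\,I_{H-N[u]}$, that every proper induced subgraph $S$ of a connected $H$ satisfies $I_S>0$ on $[-\lambda,0]$. Hence $I_H'(-\lambda)>0$.

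Consequences:
\begin{itemize}
\item Every graph with polynomial $Q_6$ is disconnected, e.g.\ $H_1\cup H_1$.
\item The symmetric instance of your template can only return this graph. That instance is four pairwise non-adjacent $K_4$'s (cross edges would drop $i_4$ below $256$) plus eight extra vertices each missing whole cliques.
\item The paper does no better. Its item (6) just points back to the diagram of $H_1$ because $Q_6=Q_1^2$, which again amounts to $H_1\cup H_1$.
\item It also follows that $(1+z)Q_6$ in the Proposition's two-component list is not a genuine two-component decomposition.
\end{itemize}

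\textbf{Gap 2: $Q_2,\dots,Q_5$ are realizable but not exhibited.} Here the gap is only one of execution, and you can close it without any search.
\begin{itemize}
\item $Q_2=(1+z)Q_1$ fits your template. Take cliques $K_4,K_2,K_2$ with no edges between them, plus a $K_5$ joined to both $K_2$'s, with three of the $K_5$'s vertices also joined to the $K_4$. This gives $28$ non-edges and $16$ independent triples.
\item $Q_3$ is realized by the complement of $K_4\cup 5K_3\cup C_5$.
\item Since $I_{G[K_m]}(z)=I_G(mz)$, you have $Q_4(z)=P(2z)$ and $Q_5(z)=R(3z)$, where $P(w)=1+12w+19w^2+8w^3+w^4$ and $R(w)=1+8w+14w^2+7w^3+w^4$.
\item For $P$, take $G$ to be the complement of $K_4\cup F$, where $F$ is two copies of $K_4$ minus an edge joined by a $3$-edge matching. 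Here $F$ has $13$ edges, $4$ triangles and no $K_4$.
\item For $R$, take $G$ to be the complement of the graph on $\{1,\dots,8\}$ formed by a $K_4$ on $\{1,2,3,4\}$, the triangle $\{5,6,7\}$, and the edges $78,15,25,38,48$. This graph has $14$ edges, $7$ triangles and one $K_4$, and it is not a join.
\item Both complements $G$ are connected, so $G[K_2]$ and $G[K_3]$ are connected graphs realizing $Q_4$ and $Q_5$ respectively.
\end{itemize}
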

\begin{enumerate}
    \item  Let \( H_1 \) be a graph with 12 vertices having independence polynomial \( I_{H_1}(z) = 1 + 12z + 16z^2 \). Since this polynomial does not include \( z^3 \) term, it indicates that the complement of \( H_1 \) is \( K_3 \)-free. Consequently, the largest independent set in \( H_1 \) can have a maximum of two vertices. Some possible realization of this graph is illustrated in Figure~\ref{fig 1}.
    \item  Let \( H_2 \) be a graph with 13 vertices, represented by the independence polynomial \( I_{H_2}(z) = 1 + 13z + 28z^2 + 16z^3 \). The appearance of a non-zero coefficient for \( z^3 \) indicates that the complement of \( H_2 \) contains sixteen triangles, yet it is still \( K_4 \) free. A concrete example of \( H_2 \) is illustrated in Figure~\ref{fig 2}.
    \item Consider the graph \( H_3 \) such that $I_{H_3}(z) = 1 + 24z + 26z^2 + 9z^3 + z^4.$ Then the complement of $H_3$ has 24 vertices, 26 edges, and 9 triangles. The absence of  \( z^5 \) term indicates that the complement of \( H_3 \) is \( K_5 \) free. While these graphs share the same triangle-counting polynomial, they differ in their local configurations and connectivity patterns. 
    % Figure \ref{} shows six representative examples.
    \item 
    Let \( H_4 \) be a graph with 24 vertices, characterized by its independence polynomial \( I_{H_4}(z) = 1 + 24z + 76z^2 + 64z^3 + 16z^4 \). The complement of \( H_4 \) has 76 edges, 64 triangles, and 16 subgraphs that are isomorphic to \( K_4 \), while it does not contain any \( K_5 \) subgraphs. Five pairwise non-isomorphic realizations of the complement of \( H_4 \) are presented in Figure~\ref{fig 3}.
    \item  Let \(H_5\) be a graph on 24 vertices whose independence polynomial is given by \( I_{H_5}(z) = 1 + 24z + 126z^2 + 189z^3 + 81z^4 \). The complement of \(H_5\) contains 126 edges, 189 triangles, and 81 copies of \(K_4\), and it has no subgraphs isomorphic to \(K_5\). Two mutually non-isomorphic realizations of the complement of \(H_5\) are shown in Figure~\ref{fig 4}.
    \item Let \(H_6\) be a graph on 24 vertices with independence polynomial \( I_{H_6}(z) = 1 + 24z + 176z^2 + 384z^3 + 256z^4 \). The same diagram as in \eqref{fig 1} can be used to represent this graph, since its independence polynomial is the square of the independence polynomial of the graph \(H_1\).
\end{enumerate}
\begin{figure}
    \centering
    \includegraphics[width=0.5\linewidth]{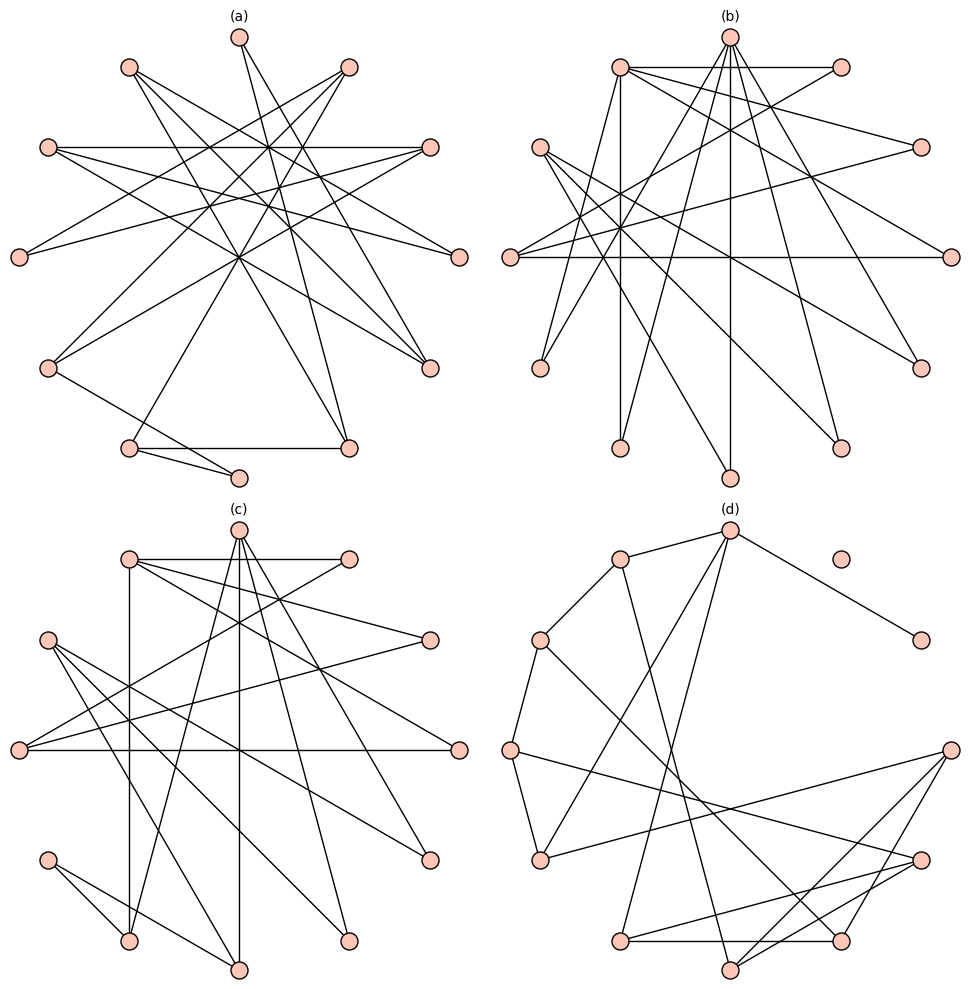}
    \caption{Some possible non-isomorphic connected graphs of the complement of $H_1$ where $I_{H_1}(z)=1+12z+16z^2$.}
    \label{fig 1}
\end{figure}

\begin{figure}
    \centering
    \includegraphics[width=0.6\linewidth]{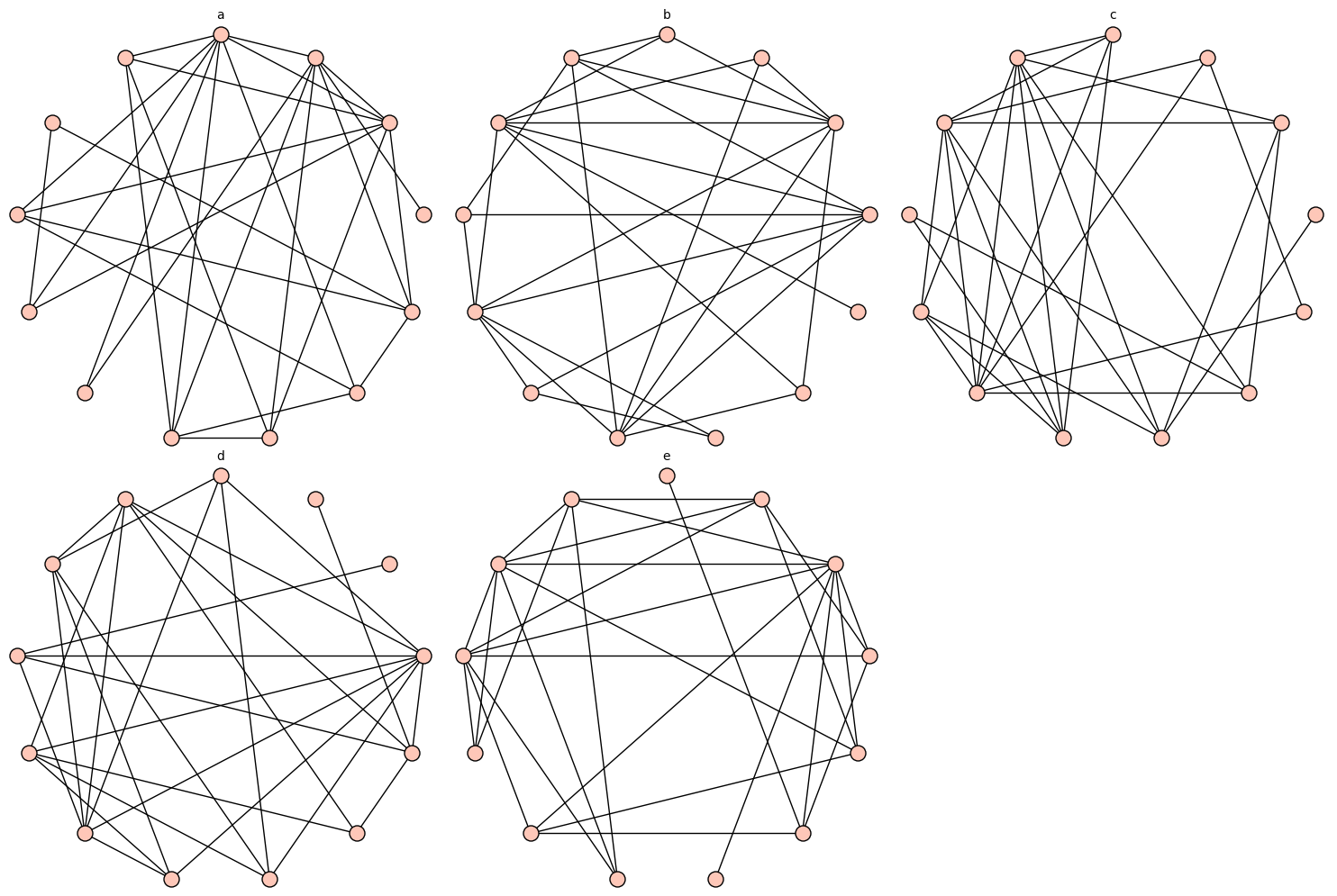}
    \caption{Some possible non-isomorphic connected graphs of the complement of $H_2$ where $I_{H_2}(z)=1+13z+28z^2+16z^3$.}
    \label{fig 2}
\end{figure}
\begin{figure}
    \centering
    \includegraphics[width=0.7\linewidth]{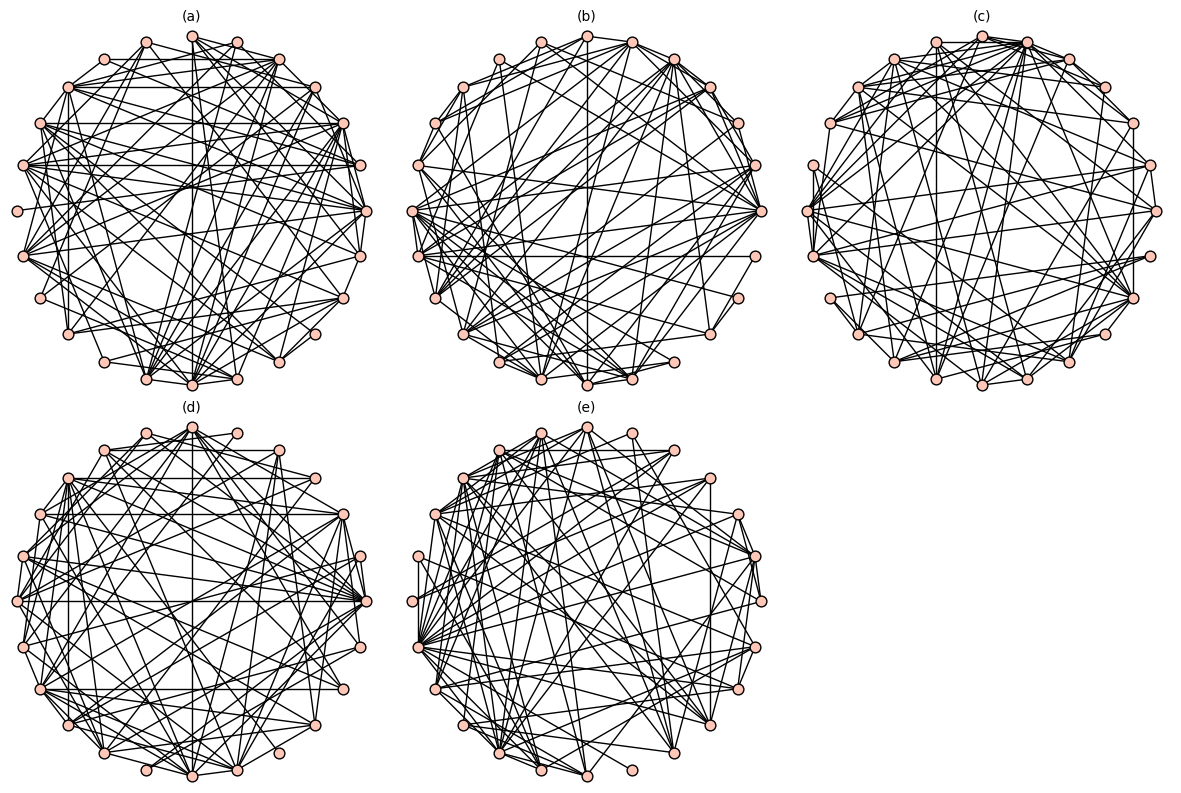}
    \caption{Some possible non-isomorphic connected graphs of the complement of $H_4$ where $I_{H_4}(z)=1+24z+76z^2+64z^3+16z^4$.}
    \label{fig 3}
\end{figure}
\begin{figure}[htbp]
    \centering

    \begin{minipage}{0.25\textwidth}
        \centering
        \includegraphics[width=\textwidth]{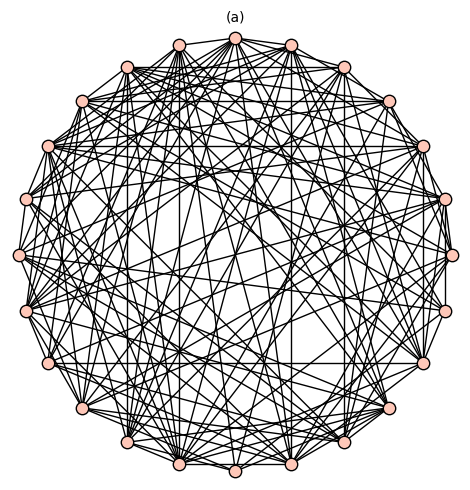}
    \end{minipage}
    \hspace{0.02\textwidth}
    \begin{minipage}{0.25\textwidth}
        \centering
        \includegraphics[width=\textwidth]{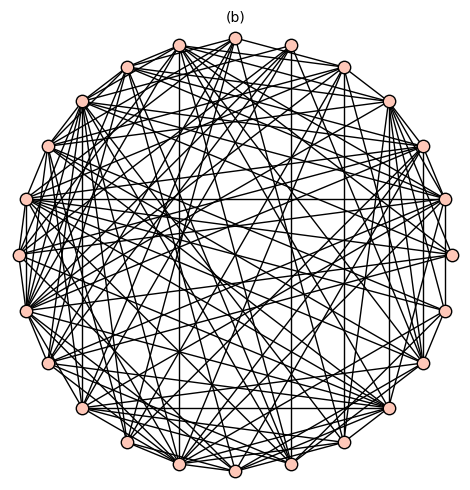}
    \end{minipage}

    \caption{Some possible non-isomorphic connected graphs of the complement of $H_5$ where $I_{H_5}(z)=1+24z+126z^2+189z^3+81z^4$.}
    \label{fig 4}
\end{figure}
% \newpage
	% \bibliographystyle{abbrv}
	% \bibliography{ref}

\end{document}